\providecommand{\U}[1]{\protect\rule{.1in}{.1in}}
\newtheorem{theorem}{Theorem}[section]
\newtheorem{proposition}[theorem]{Proposition}
\theoremstyle{definition}
\newtheorem{definition}[theorem]{Definition}
\renewcommand\a{\alpha}
\renewcommand\b{\beta}
\newcommand\g{\gamma}
\renewcommand\d{\delta}
\newcommand\e{\varepsilon}
\renewcommand\l{\lambda}
\newcommand\G{\Gamma}
\newcommand\f{\frac}
\newcommand{\N}{{\mathbb{N}}}
\newcommand{\Z}{{\mathbb{Z}}}
\newcommand{\Int}{{\mathbb{Z}}}
\newcommand{\R}{{\mathbb{R}}}
\newcommand{\RP}{{\mathbb{RP}}}
\newcommand{\E}{{\mathbb{E}}}
\renewcommand\i{^{-1}}
\renewcommand\({\left(}
\renewcommand\){\right)}
\numberwithin{equation}{section}
\begin{document}

\title{Non-Abelian Analogs of Lattice Rounding}

\author{Evgeni Begelfor\\Department of Computer Science\\
The Hebrew University of Jerusalem\\   \texttt{begelfor@gmail.com}
\and Stephen D. Miller\thanks{Supported by NSF grant DMS-1201362.}\\Department of Mathematics\\
Rutgers University\\
\texttt{miller@math.rutgers.edu}
\and Ramarathnam Venkatesan\\ Microsoft
Research
\\ Redmond, WA and Bangalore, India
\\ \texttt{venkie@microsoft.com} }
\date{January 12, 2015}
\maketitle

\begin{abstract}
Lattice rounding in Euclidean space can be viewed as finding the nearest point in the orbit of an action by a discrete group, relative to the norm inherited from the ambient space.  Using this point of view, we initiate the study of non-abelian analogs of lattice rounding involving matrix groups.  In one direction, we give an algorithm for solving a normed word problem when the inputs are random products over a basis set, and give theoretical justification for its success.  In another direction, we prove a general inapproximability result which essentially rules out {\em strong approximation algorithms} (i.e., whose approximation factors depend only on dimension) analogous to  LLL in the general case.
\end{abstract}

Keywords:  lattice rounding, matrix groups, norm concentration, Lyapunov exponents, word problems, inapproximability.

\section{Introduction}

Given a basis $\{a_i\}_{i=1}^n$ of a lattice $L\subset\R^n$ and a vector $y\in\R^n$,  the Lattice Rounding Problem (\textsc{lrp}) in Euclidean space asks to  find
 $\underset{z\in L}{\arg\min}$ $||z-y||_{2}$, that is,
 a vector $z\in L$ nearest to $y$.
This problem is
very closely related to the lattice basis reduction problem of finding a good basis
for $L$, which informally is to find another basis $\{b_{i}\}_{i=1}^{n}$ for
$L$ whose elements are as orthogonal as possible. The motivation is that given
such a good basis $\{b_{i}\}_{i=1}^{n}$, \textsc{lrp} may be easy. To wit, if
$L=\Z
^{n}$  a good basis is trivial to find, and \textsc{lrp} can be solved by
coordinate-wise rounding. For general $L$ and bases $\{a_{i}\}_{i=1}^{n},$
one has NP-hardness results for exact and approximate versions of \textsc{lrp}
\cite{arora93hardness,dinur98approximatingcvp}, and their study is an active
area of research.

The presumed hardness of these problems also has led to constructions of
cryptosystems.  This typically involves three main ingredients:
\begin{enumerate}
  \item[(a)] \textbf{Good Basis.} Generation of a basis  $\{b_{i}\}_{i=1}^{n}$ for $L$
that is good in the sense that  \textsc{lrp} is easy relative to it on inputs randomly
chosen from some distribution $\nu$.
  \item[(b)] \textbf{Bad Basis.}  Generation of a suitable matrix $M\in SL_n(\Z)$ such that \textsc{lrp} with respect to  $\nu$ is hard relative to the basis $\{a_{i}\}_{i=1}^{n}$, where $a_i=Mb_i$.
  \item[(c)]  \textbf{Public Key System.} One keeps the good basis as the private key
and the bad basis as a public key, and designs an encryption or signature scheme
such that an attack on it would entail solving \textsc{lrp} relative to a bad basis.
\end{enumerate}

\noindent
This paper presents a non-abelian generalization of lattice rounding, and some steps in the direction of ingredients (a) and (b).  Our generalization starts with the viewpoint of $\R^n$ as an additive abelian group and $L$ as a discrete subgroup:~\textsc{lrp} is equivalent to finding the nearest point to $z$ (in the ambient metric) to the orbit of the origin under the action of $L$.  This viewpoint can be extended to a larger class of groups, and spaces upon which they act.  For example, one could consider a Lie group such as  the $n\times n$ invertible matrices $G=GL_n(\R)$, and a discrete subgroup $\G$; this direction quickly leads to  rich mathematical theory connected with dynamics and automorphic forms.  In this case one could choose ambient metrics on $G$ related to
a variety of matrix norms.

Another direction is to consider the action of $G$ on some space $X$ endowed with its own metric.  
For example, $G=GL_n(\R)$ acts on the vector space $X=\R^n$ or even the projective space $\RP^{n-1}$ by the usual multiplication of vectors by matrices.  Let $\G$ as before denote a subgroup of $G$.  A non-abelian analog of lattice rounding   asks to find the closest point in the $\G$-orbit of a fixed vector in $\R^n$, where the closeness is measured using some
natural metric on vectors (but not on matrices, although we do make a restriction on word length for practical reasons).   

Alternatively, if $\G$ and $X$ are themselves endowed with a discrete structure (e.g., $\G$ consists of integral matrices and $X$ consists of integral vectors), we can instead study the problem of recognizing elements of a $\G$-orbit.  To address items (a) and (b) above, is natural to
ask if one can develop analogous positive algorithms for rounding
with good bases and, conversely, negative results for general subgroups $\Gamma$ in
$GL_n(\R)$.  One naive approach would be to modify a generating set $\{g_1,\ldots,g_r\}$ by successively replacing a generator $g_i$ by $g_i g_j^c$, where $j\neq i$ and $c\in \Z$.  In the abelian case such repeated modifications generate any change of lattice basis.  However, in the non-abelian case there are some geometric constraints (such as course quasi-isometry) which may at times dull the effects of such a change.  We do not investigate this direction here.

In Section 3 we consider the  Word Problem on Vectors (\ref{WordProblemonVectors}), for which we propose the Norm Reduction Algorithm (\ref{NRA}).
 The analysis of the latter leads to well-studied  mathematical and
algorithmic topics.  For example, multiplying random elements of $\G$ times a fixed vector can be viewed as a {\em generalized
Markov chain}  (using more than one matrix); the growing vector norms of these products is itself a  generalization of the law of large numbers  (the case of $n=1$).
Additionally, the conditions for the success of  our Norm Reduction Algorithm depend on an analog of the
spectral or  norm gap in Markov chains:~it requires instead a  gap
between {\em Lyapunov exponents} (see (\ref{lyap})).

\subsection*{Some remarks on our generalization}
The generalization of \textsc{lrp} from lattices   $L$ in  $\R^n$
to finitely-generated subgroups $\Gamma=\langle S \rangle$ in $GL_n(\R)$ is neither unique nor straightforward.  Here we seek to make a distinction between
our norms and the word-length metric, since the latter already appears in the existing literature  in
combinatorial group theory and the study of special groups (e.g.,
braid groups \cite{BraidEqns}) from algorithmic and cryptographic points of
view. We informally outline a few issues that guide our formulation.

\textbf{Full (or at least large) dimensionality}: We would like our discrete subgroups
 to not be contained inside some subgroup of much smaller dimension of the ambient group. In $\R^n$ one typically
assumes  the lattice $L$ has full rank, or least has relatively large rank. Its natural matrix analogue is to
require the {\em Zariski closure} of $\G=\langle S \rangle$ be the full group (or at least
correspond to a subgroup having a significant fraction of the
dimension of the full group).  By definition, this means that the full group is the only group containing $S$ which can be defined as the common zeroes of a set of polynomial equations. This ensures $\G=\langle S \rangle$ is non-abelian in as general way as possible.

For example, if $S$ has only diagonal matrices   it cannot generate any non-abelian group, and its  Zariski closure  is at most an $n$-dimensional subgroup of the $n^2$-dimensional group $GL_n(\R)$. In fact, by considering  commuting diagonal matrices  one can
embed subset-sum type problems and get NP-hardness results.
Note that matrices composed of $2\times 2$ blocks along the diagonal can generate non-abelian groups that essentially describe simultaneous problems in dimension 2; nevertheless, the Post Correspondence Problem can be embedded as a
word problem over   $4\times 4$ matrices with $2\times 2$ blocks, proving the
undecidability of the latter \cite{markov}.  However, certain problems can actually become easier in the non-abelian setting: for example, finding the order of a random element
in $S_n$ is much easier than in $(\Z/p\Z)^*$.


\textbf{Metrics:} The distinction between  the word length metric and ambient matrix norm is discussed in some detail in Section 2 below. The former depends   on the
generating set $S$.  In general these can be very different notions of distance, which makes our study difficult -- yet is key to potential cryptographic applications.  We use the Furstenberg-Kesten theory \cite{furst,FK,lepage} of random matrix products to correlate the two (in a probabilistic sense) in certain situations, which is analogous to the ``good basis'' situation described in (a) above.

\textbf{Finite co-volume and compactness} If $L$ has full rank, then $L\backslash \R^n$ is a compact, finite-volume quotient.  However, neither property necessarily extends to the quotients $\Gamma\backslash G$ in many important examples of $\Gamma$ and $G$. Thus
we do not impose this requirement.  Some further comments are given just below in the beginning of the following section.

\subsection*{Outline of this paper}

Section 2 contains some background about different metrics on Lie groups and their discrete subgroups.  Section 3 introduces the statements of the word problems that motivate our results, as well as the Norm Reduction Algorithm (\ref{NRA}), which is rigorously analyzed in Theorem~\ref{positiveresult}.  The Closest Group Element Problem is also given in section 3, along with the statement of its inapproximability result Theorem~\ref{negthm}.  The analysis of the Norm Reduction Algorithm is performed  in Section 4 using results in dynamical systems. Some experimental results on the algorithm are also presented in Section 4.5.  The   proof of Theorem~\ref{negthm} is given in Section 5; it demonstrates a polynomial time reduction from the Traveling Salesman Problem.

We would like to thank Anthony Bloch, Hillel Furstenberg, Nathan Keller, Peter Sarnak, Adi Shamir, Boaz Tsaban, and Akshay Venkatesh
for their helpful comments.  

\section{Background}

Just as a lattice $L=\langle a_1,\ldots, a_n \rangle$ is additively generated by its basis $\{a_i\}$, the subgroups  $\G=\langle g_1,\ldots,g_k\rangle$ we consider
will be finitely generated.  A crucial difference, however, is that the quotient of $\R^n$ by  $L$ is a compact $n$-dimensional torus with finite volume under the usual Lebesgue measure on $\R^n$ (for example, the quotient $\Z^n\backslash \R^n$).  However, this fails to be true for nice examples such as $GL_n(\Z)\backslash GL_n(\R)$ or even $SL_n(\Z)\backslash SL_n(\R)$, both of which are noncompact under the natural group invariant metric inherited from $G$ (the latter quotient, however, does have finite volume).  The theory and construction of both compact and noncompact discrete subgroups of Lie groups involves numerous beautiful subtleties (see \cite{margulis,zimmer}); we do not restrict ourselves to these objects in this paper.

There are two natural notions of size in $\G$, and by extension to the $\G$-orbit of any basepoint $x\in X$:

\begin{enumerate}
\item \emph{Word length metric}: If $S=\{g_1,\ldots,g_k\}$ is a generating set of $\G$ as above, any element $w\in \G$ can be expressed as a finite word in the alphabet $S\cup S\i$.  There may be many possibilities for such a word, taking into account relations amongst the $g_i$ (including the trivial relation $g_ig_i\i=1)$.  The minimal such length among all such expressions is the {\em word length of $w$ with respect to $S$}. 

  The ability to efficiently compute the word length of $w$ enables one to efficiently write it as a minimal length word, simply by successively checking which of the expressions $g_i^{\pm 1} w$ reduces the word length by one.  Finding the word length depends of course on the generating set $S$, which is analogous to the basis of a lattice.
  In analogy with ingredients (a), (b), and (c) above for Euclidean
lattices, we want the word length to be difficult for typical generating sets $S$ of $\G$, yet at the same time  easy for some ``good bases'' $S$; moreover, we would like to be able to    transform
each ``good base'' into a seemingly bad one.

\item \emph{Inherited metric}: Fundamental to   lattice reduction and
rounding is the notion of  metric on the ambient space.   Natural metrics on $G$ and $X$ therefore can be used to give  generalizations of lattice rounding.  Combining this with word length results in problems such as the following: given $\ell \in \N$, $\Gamma\subset GL_n(\R)$, and vectors $y$ and $z\in \R^n$, find $\g\in \G$ such that $||gy-z||_{2\text{ }}$ is minimized over all $\g\in\Gamma$ with word length at most $\ell$.  Thus the length parameter $\ell$ is used to complement (rather than to duplicate) the ambient metric.
\end{enumerate}

Though we do not present any cryptographic systems here, generalizations of attacks on existing cryptosystems  motivate studying rounding problems in  more general settings than  lattices in $\R^n$ alone.
 With some performance enhancing additions, the lattice reduction algorithm LLL \cite{lll}
has long become a valuable tool in  cryptanalysis
\cite{joux98lattice}, and typically is more effective than
the provable guarantees attached to it indicate alone.  Starting
with the original attack of Shamir \cite{shamir82}, some very
effective attacks have been discovered. The attacks  are often based on the Shortest Vector
Problem in lattices:~given a basis  for $L$,
find a nonzero vector in $L$ with minimal norm.
In polynomial time, the LLL algorithm
 finds a vector within a factor of $2^{n/4}$ of being the shortest, a  {\em strong bound} -- i.e., one which  depends
only on the dimension of the lattice, and not on the sizes of the
entries in the lattice basis themselves.  Babai's rounding algorithm \cite{babai} -- which is
based on LLL -- also has this feature for solving lattice rounding problems in Euclidean space.  The fact that this bound depends only on the dimension is crucial for attacks.

In contrast, we prove in Theorem~\ref{negthm} that the analogous question of rounding products of matrices cannot have a polynomial time strong approximation algorithm\footnote{where the approximating factor is a polynomial time computable function of the dimension.} -- unless P=NP.  This is done by creating a polynomial time reduction to the Traveling Salesman Problem, which has a similar inapproximability result.
Thus a strong approximation algorithm like LLL  for rounding in matrix groups is unlikely to exist.

\section{Some non-abelian problems and an algorithm}\label{sec:wordproblems}

We study  problems that arise out of group actions on normed spaces, where
we are concerned with the action of group elements that have short expressions relative to a given basis or generating set.
 We now proceed
to formally define these problems and state some known results.

We shall work with $GL_d(\R)$, the group of all invertible $d\times d$ real matrices, and often with subsets that have integer entries.  Given $g_1,\ldots,g_k\in GL_d(\R)$, we consider the possible products of these matrices up to a certain length bound, and whether or not they can be recognized as such. The word
problem is the algorithmic task of representing a given matrix in this semigroup as a product of the generators:

\begin{equation}
\label{WordProblem}
\text{
\fbox{\fbox{\begin{minipage}{11.5cm}
\begin{algorithmic}
\STATE {\bf Word Problem}
\STATE {\sc \underline{Input}}:~Matrices $g_1,\ldots,g_k$ and $x\in GL_d(\R)$.
\STATE {\sc \underline{Output}}:~An integer $\ell>0$ and indices $1\le s_1,\ldots,s_\ell\le k$ such that $g_{s_1} g_{s_2}\cdots g_{s_\ell}=x$, if such a solution exists.
\end{algorithmic}
\end{minipage}}}}
\end{equation}
This word problem is known to be unsolvable when $d\ge 4$ \cite{mihail}; however, there is an algorithm for specifically constructed generators when $d=2$ \cite{Gurevich} (the case of $d=3$ is open).
It becomes NP-hard for $d\ge 4$ if we bound the word length $\ell$, as we do for all our problems in the rest of the paper:
\begin{equation}
\label{BoundedWordProblem}
\text{
\fbox{\fbox{\begin{minipage}{11.5cm}
\begin{algorithmic}
\STATE {\bf Bounded Word Problem} \STATE {\sc \underline{Input}}:~An integer $L>0$,
 and matrices $g_1,\ldots,g_k$ and $x\in GL_d(\R)$.
\STATE {\sc \underline{Output}}:~Indices $1\le s_1,\ldots, s_L\le k$ such that $g_{s_1} g_{s_2}\cdots
g_{s_L}=x$, if such a solution exist.
\end{algorithmic} 
\end{minipage}}}}
\end{equation}
This problem can be modified to allow for words of length $\le L$.

We now define another related problem, in which the matrices act on vectors:
\begin{equation}
\label{WordProblemonVectors}
\text{
\fbox{\fbox{\begin{minipage}{11.5cm}
\begin{algorithmic}
\STATE {\bf  Word Problem on Vectors}.
\STATE {\sc \underline{Input}}:~An integer $L>0$, matrices $g_1\ldots,g_k\in GL_d(\R)$ with integer entries, and nonzero vectors $v,w\in \Int^d$.
\STATE {\sc \underline{Output}}:~An integer $\ell\le L$ and indices $1\le s_1,\ldots, s_\ell\le k$  such that $g_{s_1} g_{s_2}\cdots g_{s_\ell}v=w$, if such a solution  exists.
\end{algorithmic}
\end{minipage}}}}
\end{equation}
Typically we are interested in instances where $\ell=L$ and the indices $s_j$ are chosen independently and uniformly at random from the above interval.
Using the ambient norm on Euclidean space, we present the following algorithm for this problem:
\begin{equation}\label{NRA}
\text{
\fbox{\fbox{\begin{minipage}{11.5cm}
 \underline{{\bf Norm Reduction Algorithm:}}
\begin{algorithmic}
\STATE Let $j=0$, and $t$ be a fixed parameter.
\REPEAT
\STATE $j=j+1$
 \STATE $s_j = \underset{i}{\arg\min} \|g_i^{-1}w\|$
\STATE
$w=g_{s_j}^{-1}w$
 \UNTIL $w=v$ or  $j=L-t$.
 \STATE Solve for $s_{L-t+1},\ldots,s_{L}$ by exhaustive search.
\end{algorithmic}
\end{minipage}}}}
\end{equation}

\noindent 
We include the option of exhaustive search   for the final $t$ steps in case the algorithm performs worse on smaller words than on larger ones.  Another possibility is to use a memory-length look-ahead algorithm such as in \cite[\S7]{Thomp1}.
The Norm Reduction  Algorithm is rigorously analyzed in the next section, where it is related to a maximal likelihood algorithm.
Its success depends on some mild yet complicated conditions on generators $g_1,\ldots,g_k$ that come from dynamics.  Theorem~\ref{positiveresult} in the next section gives a rigorous upper bound on the error probability of this algorithm.  We give a successful numerical example in  Table~\ref{resultstable} in Section~\ref{numericalexample}, along with how   Theorem~\ref{positiveresult}'s constants pertain to it.

One can also define a related rounding problem, whose analysis and algorithms are quite similar.  Instead, we will focus   on the following {\em matrix rounding} question: finding a short word in a semigroup  closest to a given one (with an   length constraint imposed for practical reasons).
\begin{equation}
\label{CGEP}
\text{
\fbox{\fbox{\begin{minipage}{11.5cm}
\begin{algorithmic}
\STATE {\bf Closest Group Element Problem}  (\textsc{cgep})
\STATE {\sc \underline{Input}}:~A positive integer $L>0$, and matrices $g_1,\ldots,g_k$ and $z \in GL_d(\R)$.
\STATE {\sc \underline{Output}}:~The closest word of length $\le L$ in the $g_i$ to $z$.
\end{algorithmic}
\end{minipage}}}}
\end{equation}

\noindent Though the problem can be stated for various notions of distance, we will use   the sum-of-squares matrix distance
\begin{equation}\label{l2matnorm}
    \Big\| \,(a_{ij})\,-\,(b_{ij})\,\Big\|^2 \ \ = \ \ \sum_{i,j}|a_{ij}-b_{ij}|^2 
\end{equation}
in studying this problem.

Our main result about the \textsc{cgep} problem is the following negative result, which comes close to ruling out the existence of an algorithm such as LLL that approximates the closest element up to a constant factor depending only on the dimension.  In the following we denote by  $CGEP(g_1,\ldots,g_k,z,L)$ the solution to the \textsc{cgep} problem as above.

\begin{theorem}\label{negthm}
Let $f:\Z_{>0}\rightarrow [1,\infty)$ be a polynomial time computable function.
If there exists a polynomial time algorithm $A$ which, given the input of a \textsc{cgep} problem as in (\ref{CGEP}), always outputs a word $w'$ of length $\le L$ in the $g_i$ such that
\begin{equation}\label{negthm1}
    \|w'\,-\,z\| \ \ \leq  \ \ f(d) \,  \|\,CGEP(g_1,\ldots,g_k,z,L)\,-\,z\,\|
    \, ,
\end{equation}
 then $P=NP$.
\end{theorem}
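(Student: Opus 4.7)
I would prove Theorem~\ref{negthm} by exhibiting a polynomial-time, approximation-preserving reduction from the general (asymmetric, non-metric) Traveling Salesman Problem (\textsc{tsp}) to \textsc{cgep}. By the standard Sahni--Gonzalez argument (reducing Directed Hamiltonian Cycle to \textsc{tsp} by weighting non-edges by $p(n)\cdot n$), general \textsc{tsp} on $n$ cities admits no polynomial-time $p(n)$-approximation for any polynomial-time computable $p$ unless $P=NP$. Hence it suffices to reduce an $n$-city \textsc{tsp} instance to a \textsc{cgep} instance of dimension $d=\mathrm{poly}(n)$ so that an $f(d)$-approximate \textsc{cgep} solution reads off as an $f(d)$-approximate \textsc{tsp} tour.

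\textbf{Reduction.} Given a \textsc{tsp} instance with non-negative integer weights $w_{ij}$, introduce one generator $g_{ij}\in GL_d(\R)$ for each directed edge $(i,j)$, and set the length bound $L=n$. Each $g_{ij}$ is built as a block upper-triangular integer matrix with identity diagonal blocks (so invertibility is automatic) whose off-diagonal blocks write into three pairwise disjoint groups of coordinates: a \emph{path-continuity} slot that under composition records an entry of fixed magnitude $M$ exactly when consecutive edges fail to chain ($t_k\ne s_{k+1}$); a \emph{visited-set} slot that records an entry of magnitude $M$ for each vertex used more than once; and a linear \emph{running-cost} coordinate that accumulates $W=\sum_k w_{s_k t_k}$. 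Let $z$ be the matrix obtained from an idealized Hamiltonian tour of weight zero. Because the three contributions occupy disjoint entries, the sum-of-squares norm (\ref{l2matnorm}) yields
\begin{equation*}
    \|\,g_{s_1 t_1}\cdots g_{s_n t_n}\,-\,z\,\|^{2} \ \ = \ \ M^{2}\,V \,+\, W^{2},
\end{equation*}
where $V$ counts the total number of path-breaks and revisits. Choose $M$ as an integer strictly larger than $f(d)\cdot n\cdot\max_{ij}w_{ij}$; this is polynomial-sized because $d=\mathrm{poly}(n)$ and $f$ is polynomial-time computable.

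\textbf{Transfer and conclusion.} Running the hypothesized algorithm $A$ on this \textsc{cgep} instance produces a word $w'$ with $\|w'-z\|\le f(d)\,\|\mathrm{OPT}-z\|\le f(d)\,W_{\mathrm{OPT}}<M$, which forces $V=0$. Hence $w'$ encodes a genuine Hamiltonian tour, and its weight, read off from the cost coordinate, equals $\|w'-z\|\le f(d)\,W_{\mathrm{OPT}}$, i.e.\ an $f(d)$-approximate \textsc{tsp} solution. Since $d=\mathrm{poly}(n)$ and $f$ is polynomial-time computable, the composed approximation factor is a polynomial-time computable function of $n$, contradicting Sahni--Gonzalez unless $P=NP$.

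\textbf{Main obstacle.} The technical heart of the argument is the explicit construction of the $g_{ij}$. The naive rank-one ``transition'' $e_j e_i^{\mathsf T}$ fails invertibility; the unit-diagonal block upper-triangular scaffold repairs this for free. The subtler point is that block upper-triangular matrix products generate bilinear cross-terms of the form $B_{s_k t_k}D_{s_\ell t_\ell}$ (with $k<\ell$), which can in principle cause different length-$n$ sequences to collide in a shared entry and cancel. I would side-step this by routing each registered event---a path-break at position $k$, a revisit of a given vertex, or the individual weight contribution $w_{s_k t_k}$---into its own uniquely labelled coordinate indexed by the pair (position, event type), so that distinct events physically cannot share an entry. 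Verifying the decomposition $M^2V+W^2$ entry-by-entry under this labelling is the only tedious part of the proof; once the slots are laid out, everything else---the invertibility of $g_{ij}$, the value of the \textsc{cgep} optimum on tour-inputs, and the approximation-factor transfer---is routine.
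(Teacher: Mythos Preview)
Your overall strategy---reduce from non-metric \textsc{tsp} and invoke the Sahni--Gonzalez inapproximability---is exactly what the paper does. The difference lies in how the generators $g_{ij}$ are built, and there your sketch has a real gap.

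The paper does \emph{not} use purely unipotent block upper-triangular matrices. Its path-continuity device is a multiplicative $n\times n$ block $M_{ij}=\alpha I+\beta E_{ij}$, where $E_{ij}$ is the matrix unit. Since $E_{ij}E_{kl}=\delta_{jk}E_{il}$, the product $\prod_r M_{e_r}$ is close to $\beta^n E_{11}$ precisely when the edges chain into a path from vertex $1$ back to vertex $1$; the $\alpha I$ term is present only to restore invertibility and is controlled by taking $\alpha\ll\beta$. The visit-count and weight slots are unipotent and behave additively, much as in your plan. The paper then proves inequalities (not an exact decomposition $M^2V+W^2$) bounding non-tour words away from $z$ and transferring the approximation factor up to a harmless $\sqrt{2}$.

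Your alternative replaces this multiplicative detector with ``position-indexed'' slots in a unipotent scaffold. But a generator $g_{ij}$ carries no information about where it sits in the word, so it cannot by itself write to a coordinate labelled by the position $k$. Achieving position-indexed recording with unipotent matrices requires an explicit shift-register mechanism---say a nilpotent block of size $\Theta(n)$ that advances a pointer with each multiplication---and one must then verify that the payloads and cross-terms land in the intended coordinates and that the resulting dimension stays polynomial. That is the substantive construction, not bookkeeping after the fact, and you have not supplied it; your claimed exact identity $\|\cdot\|^2=M^2V+W^2$ (and hence your clean transfer step) depends entirely on it. The paper's $E_{ij}$-product trick sidesteps the whole issue: path composition is precisely what matrix-unit multiplication already encodes, so no positional bookkeeping is needed.
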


\noindent
It is an interesting open problem whether or not the approximation factor can instead depend on the sizes of the entries.

\section{Maximum Likelihood Algorithms}\label{sec:positiveresult}

In this section we give and analyze a simple algorithm to solve the Word Problem on Vectors (\ref{WordProblemonVectors}):~try to reduce the norm at each step, or put differently, attempt to use the norm as a proxy for  word length.  This involves studying some background from dynamics related to random products of matrices, first studied by Furstenberg and Kesten \cite{FK,furst}.
  Our results are sensitive  to certain  conditions related to the generators, which we describe before stating our result.  These are discussed thoroughly in the book \cite{lepage}, which serves as a general reference for background material on the topic of this section.  In addition, several of the techniques and arguments in this section are taken from \cite{lepage}.

Let $S=\{g_1,\ldots,g_k\}$ denote a finite subset of $G=GL_d(\R)$, and $T=\langle S \rangle$ the semigroup it generates.  Throughout this section we will use $\|g\|$ to denote the operator norm of a matrix $g$.  We make the following standing assumptions on the set $S$ throughout this section:
\begin{enumerate}
  \item[{\bf A1}.] $T$  is {\em contracting} in the sense of
  \cite[Definition III.1.3]{bougerol}.  This means that $T$ has a sequence of matrices $M_1, M_2,\ldots$ such that $M_n/\|M_n\|$ converges to a rank 1 matrix. It is readily seen (using Jordan canonical form) that this condition holds automatically if $S$ (or even $T$) contains a matrix with an eigenvalue strictly larger than its others in modulus.
  \item[{\bf A2}.] $T$ is {\em strongly irreducible}: there is no finite union of proper vector subspaces of $\R^d$ which is stabilized by each element of $T$.  Equivalently, the same statement holds with $T$ replaced by the {\em group} generated by $S$ (\cite[p.~48]{bougerol}).
  \item[{\bf A3}.] The operator norms $\|g_j^{-1}g_i\|$, $j\neq i$, are all at least some constant $N>1$.
\end{enumerate}

We  prove the following result about the probability of success of the   Norm Reduction Algorithm (\ref{NRA}). This gives a strong indication (along with
numerical testing)  that norm reduction is a suitable algorithm for
solving the Word Problem on Vectors (\ref{WordProblemonVectors}).
  It is also often possible to show that the group generated by $S$ is free by deriving a quantitative version of the well-known Ping-Pong Lemma.  We do not address these issues in this version of the paper.

\begin{theorem}\label{positiveresult}
Let $S=\{g_1,\ldots,g_k\}$ be a fixed subset of $GL_d(\R)$ and $v$ a fixed nonzero vector in $\Z^d$.
Assume properties {\bf A1-3}.  Then there exists  positive quantities $\a$, $B$, and $C$ such that if $h$ is a random product\footnote{I.e., $h=g_{i_1}\cdots g_{i_L}$ where $i_1,\ldots,i_L$ are each chosen independently and uniformly from $\{1,\ldots,k\}$.} of length $L$ elements of $S$, the Norm Reduction Algorithm (\ref{NRA}) recovers $v$ from $hv$ (i.e., solves the Word Problem on Vectors (\ref{WordProblemonVectors})) with probability at least
$$1 \ - \ C\,(L-t)\,(|S|-1)\,N^{-\a},$$ where  $N$ is as defined in assumption {\bf A3} and the parameter $t$ in the algorithm is taken to be at least
$B\log N$. 
\end{theorem}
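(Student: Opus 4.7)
Write the input as $w_1 = g_{i_1} \cdots g_{i_L} v$ and set $w_\ell := g_{i_\ell} \cdots g_{i_L} v$. Provided the algorithm has made no errors in the first $\ell-1$ steps, the greedy choice at step $\ell$ is correct precisely when $\|g_j^{-1} g_{i_\ell} \bar e_{\ell+1}\| > 1$ for every $j \neq i_\ell$, where $\bar e_{\ell+1} := w_{\ell+1}/\|w_{\ell+1}\|$. The plan is (i) to identify a deterministic ``bad set'' in the projective space $\PP^{d-1}(\R)$ outside of which correctness at step $\ell$ is guaranteed, (ii) to show via the Furstenberg-Le Page theory that the law of $\bar e_{\ell+1}$ is exponentially close to a Hölder-regular stationary measure $\mu$ and that $\mu$ assigns this bad set only $O(N^{-\a})$ mass, and (iii) to union-bound over $j \neq i_\ell$ and over the $L-t$ greedy steps.

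For step (i), fix $\ell$ and $j \neq i_\ell$ and put $A := g_j^{-1} g_{i_\ell}$. Assumption {\bf A3} gives $\sigma_1(A) \geq N$; if $v^*$ is a unit right singular vector for $\sigma_1(A)$, then $\|A\bar e\| \geq \sigma_1(A)\,|\langle v^*, \bar e\rangle|$, so $\|A\bar e\| > 1$ whenever $\bar e$ lies outside the hyperplane-band
\[
  \mathcal B_{j,i_\ell}\ :=\ \{\bar e \in \PP^{d-1}(\R)\,:\, d(\bar e, (v^*)^\perp) \leq 1/N\}.
\]
Failure at step $\ell$ is therefore contained in $\bigcup_{j\neq i_\ell}\{\bar e_{\ell+1} \in \mathcal B_{j,i_\ell}\}$.

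Step (ii) is the main technical ingredient, and the principal obstacle. Under {\bf A1} and {\bf A2}, two classical results from \cite{lepage} apply. The first is Furstenberg-Guivarc'h regularity of the unique $S$-stationary probability measure $\mu$ on $\PP^{d-1}(\R)$: there are constants $\a, C_0 > 0$ with $\mu\{\bar e : d(\bar e, H) \leq \e\} \leq C_0\, \e^\a$ uniformly over hyperplanes $H$. The second is a spectral gap for the induced Markov operator on the $\a$-Hölder space $\H_\a(\PP^{d-1})$: there exist $\rho \in (0,1)$ and $C_1 > 0$ such that $|E\,f(\bar e_{\ell+1}) - \mu(f)| \leq C_1 \rho^{L-\ell}\|f\|_{\H_\a}$ for every $\a$-Hölder $f$. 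Given these, approximate the indicator of $\mathcal B_{j,i_\ell}$ from above by an $\a$-Hölder function $\widetilde\chi$ supported in a band of width $2/N$ and satisfying $\|\widetilde\chi\|_{\H_\a} \leq C_2 N^\a$; then
\[
  \Pr[\bar e_{\ell+1} \in \mathcal B_{j,i_\ell}]\ \leq\ \mu(\widetilde\chi)\,+\,C_1 \rho^{L-\ell}\|\widetilde\chi\|_{\H_\a}\ \leq\ C_3 N^{-\a}\,+\,C_4 \rho^{L-\ell} N^\a.
\]
Choosing $B := 2\a/|\log\rho|$ and $t \geq B\log N$ makes $\rho^{L-\ell} N^\a \leq \rho^t N^\a \leq N^{-\a}$ for all $\ell \leq L-t$, so $\Pr[\bar e_{\ell+1} \in \mathcal B_{j,i_\ell}] \leq C_5 N^{-\a}$.

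Step (iii) is then a routine union bound: there are $|S|-1$ choices of $j$ and $L-t$ greedy steps, so the total probability that any greedy step errs is at most $C(L-t)(|S|-1)N^{-\a}$. The remaining $t$ symbols are recovered exactly by the exhaustive search tacked on at the end of (\ref{NRA}), yielding the claimed success probability. The hard part is the quantitative half of step (ii)---extracting from {\bf A1} and {\bf A2} alone both Hölder regularity of $\mu$ and a spectral gap in a norm strong enough to control band-indicators at scale $1/N$---which is exactly the machinery developed in \cite{lepage}.
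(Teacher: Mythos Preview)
Your proposal is correct and follows essentially the same route as the paper: identify the failure event at each greedy step with membership of the inner unit vector in a hyperplane band (via the singular-vector inequality, which is the paper's Proposition~\ref{innerprod}), compare the law $\mu^{L-\ell}\delta_v$ to the stationary measure $\nu$ using a H\"older test function (the paper's Propositions~\ref{holdbd} and~\ref{holder}, which you package as Le~Page's spectral gap), bound $\nu$ of the band by Guivarc'h--Raugi H\"older regularity (the paper's Theorem~\ref{guivarch}), choose $t\asymp \a\log N/|\log\rho|$ to absorb the $N^{\a}\rho^{L-\ell}$ term, and union-bound over steps and competitors. The only cosmetic differences are that the paper derives the contraction estimate from scratch via the cocycle $s(M,(x,y))=\log(\delta(Mx,My)/\delta(x,y))$ rather than citing a spectral gap, and uses $\e\sim N^{-2}$ in its H\"older cutoff where you use $\e\sim N^{-1}$, leading to slightly different values of $B$.
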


Roughly speaking, the algorithm succeeds for long enough words when the operator norms $\|g_j\i g_i\|$ are themselves sufficiently large.  Though the constant $N$ is readily computable from the generating set $S$, the numerical values of $C$ and $\a$ are unfortunately more subtle.  We are unable to rigorously prove that $C$ is reasonably small, or that $\a$ is somewhat large.  (It is not clear that these statistics of $\langle S\rangle$ are even  computable in general; see \cite{fuchs,logic,sarnak}.) In particular, one cannot directly take $N\rightarrow\infty$ to get the above error estimate to decay to zero, without possibly simultaneously affecting   $\a$.  However, in concrete examples of generating sets it is possible to make heuristic estimates of the values of $N$ and $\a$ from the proof.  We give such an example in Section~\ref{numericalexample}, in which numerical estimates for these constants give a small error probability in Theorem~\ref{positiveresult}.  Our experiments on this example are vastly better:~the algorithm was successful in nearly all trials we tested for $L\le 1000$ (see Table~\ref{resultstable}).

\subsection{Motivation for the algorithm and its analysis}

Recall the Word Problem on Vectors (\ref{WordProblemonVectors}), in which the matrices in $S$ are assumed to be integral.  One is given $L\in \N$ and  vectors $v$ and $w=hv\in \Z^d$, where $h$ is an unknown word of length at most $L$ in $S$; the problem is to find some word $h'$ of length at most $L$ in $S$ such that $w=h'v$.
Were we to have a concrete description of $\nu$ as a product $f\l$, where $f$ is an easily computable function, we could attempt to solve for $h$ using the following maximum likelihood algorithm:
\begin{center}
\fbox{\fbox{\begin{minipage}{12cm}
\begin{algorithmic}
\STATE {\bf Idealized Algorithm:}
\STATE Let $j=0$
\REPEAT
\STATE $j=j+1$
 \STATE $s_j = \underset{i}{\arg\max}
\frac{f(g_i^{-1}w)}{\|g_i^{-1}w\|^d|\det g_i|}$
\STATE
$w=g_{s_j}^{-1}w$
 \UNTIL $w=v$ or $j=L$.
\end{algorithmic}
\end{minipage}}}
\end{center}
Recall the notation $\underset{i}{\arg\max}$ denotes a
value of $i$ which maximizes the expression it precedes.  The
particular expression here represents the change in local density under
the map $w\mapsto g_i\i w$.  The numerator accounts for the
difference between $\nu$ and $\l$, while the denominator represents
the change in the uniform measure $\l$. If successful, the algorithm
produces $h'$ as $g_{s_1}g_{s_2}\cdots$, possibly reconstructing
$h$.   However, it is impractical to assume that   $f$ is easily computable.
Because of this limitation,   we instead use the   simpler, more practical Norm Reduction Algorithm (\ref{NRA}).  It is tantamount to  pretending $f$ equals 1 and that the matrices have determinant
1,  meaning that we seek to minimize $\|g_j^{-1}w\|$ at each stage.

In effect, the Norm Reduction Algorithm (\ref{NRA}) uses the norm as a height function, and proceeds by descent to shorten the word length of $h$ each time.  Of course,  a direct way to measure the word length would be preferable.  The relationship between word length and matrix norm has been studied by several authors, e.g., \cite{gromov,LMR}.

To study the distribution of elements of $T$ and their orbits in
$\R^d$, we need to define some measures.  We let $\mu=\mu_S$ denote
the Dirac measure of $S$ on $G$, meaning that it gives mass
$\f{1}{|S|}$ to each element.
  Given two measures $\mu_1$, $\mu_2$ on $G$, their convolution is defined as the unique measure $\mu_1\ast\mu_2$ satisfying
\begin{equation}\label{mu1convmu2}
    \int_G f(x)\,d\mu_1\!\ast\!\mu_2(x) \ \ = \ \ \int_G \int_G f(x y)\,d\mu_1(x)\,d\mu_2(y) \ \ \ \  \text{for all~}f\in C(G)\,,
\end{equation}
the continuous functions on $G$.  To simplify notation we sometimes  write $\mu_1\ast \mu_2$ simply as $\mu_1\mu_2$; for example
the $n$-fold convolution of $\mu$ with itself will be denoted  as $\mu^n$ (it is the measure giving mass $|S|^{-n}$ to each product of $n$ elements taken from $S$, allowing repetitions).  We can also define the convolution of $\mu$ with any measure $\rho$ on $\RP^{d-1}$: $\mu\ast \rho$ is the unique measure satisfying
\begin{equation}\label{muconvrho}
    \int_{\RP^{d-1}}f(x)\,d\mu\ast\rho(x) \ \ = \ \ \int_G \int_{\RP^{d-1}}f(Mx)\,d\rho(x)\,d\mu(M)\ \ \ \  \text{for all~}f\in C(\RP^{d-1})\,.
\end{equation}
To be concrete, we identify measures on $\RP^{d-1}$ with measures on the unit sphere in $\R^d$ that are invariant under the antipodal map.
Typically the uniform measure $\l$ on $\RP^{d-1}$ is not stabilized by convolution with $\mu$, unless the matrices in $S$ are orthogonal. However, there exist measures $\nu$ on $\RP^{d-1}$ which are $\mu$-invariant:
\begin{equation}\label{muinvnu}
    \mu\ast\nu \ \ = \ \ \nu\,
\end{equation}
(see \cite[Lemma 1.2]{furst}).  Under certain conditions more can be
said about $\nu$, such as its   regularity properties. This measure is not always uniquely determined by $S$, but assumptions {\bf A1} and {\bf
A2} however guarantee the uniqueness of the $\mu$-invariant measure in our setting
(see \cite[Theorem III.4.3.(iii)]{bougerol}).

  The main step in the proof of Theorem~\ref{positiveresult} involves estimating  measures of the subsets of vectors in $\RP^{d-1}$ which get contracted by the operators $g_j\i g_i$.
  Indeed, let $p_j$ equal the probability that the algorithm obtains the wrong value for $g_{s_j}$ at the $j$-th step.  One has that $p_j = \f 1k \sum_{i\le k} p_{ji}$, where $p_{ji}$ is the probability of error in the $j$-th step, conditioned on the correct answer equaling $g_i$.  In terms of the measure $\d_v$, the Dirac measure of $v\in \RP^{d-1}$, this probability can be computed as
\begin{equation}\label{pjiconv}
    p_{ji} \ \ = \ \ \mu^{j-1} \d_v(B_{i}) \, ,
\end{equation}
where $\mu^{j-1}\d_v$ denotes $\mu^{j-1}\ast\d_v$ and
\begin{equation}\label{bi}
\aligned
    B_i \ \ = &  \ \ \{\,  x \in \RP^{d-1} \, \mid \, \exists\, r\neq i \text{~such that~}\|g_r^{-1}g_ix\| < \|x\|  \,          \}\\
    = & \ \  \cup_{r\neq i}B_{r,i}\,,
\endaligned
\end{equation}
with
\begin{equation}\label{bri}
    B_{r,i} \ \ = \ \ \{\,x\in\RP^{d-1} \, \mid \, \|g_r\i g_i x\|<\|x\|\,\}\, .
\end{equation}
Thus the   error probability in Theorem~\ref{positiveresult} is
\begin{equation}\label{errorlastt}
    \operatorname{Prob}_{\text{Error}} \ \ \le \ \ \sum_{j = t+1}^L p_j \ \ = \ \ \f 1k \sum_{\stackrel{\scriptstyle{t< j\le L}}{1\le i\le k}} \mu^{j-1} \d_v(B_{i}) \ \ \le \ \ \f 1k \sum_{\stackrel{\scriptstyle{t< j\le L}}{1\le r\neq i\le k}} \mu^{j-1} \d_v(B_{r,i})  \,.
\end{equation}
The proof therefore amounts to  estimates on $\mu^{j-1} \d_v(B_{r,i})$, which are given in the following subsections.

\subsection{Lyapunov Exponents}

In the remainder of this section, we shall need some technical results and concepts from the literature on random products of matrices.
For the reader's convenience we have chosen to cite background results in the book   \cite{bougerol} wherever possible, while at the same time attempting to correctly attribute the original source of the results.
The top two Lyapunov exponents $\g_1$, $\g_2$ of $S$ are defined through the following limits (see \cite[p.~6]{bougerol}):
\begin{equation}\label{lyap}
    \aligned
       \g_1 \ \ \ = &  \ \ \ \lim_{n\rightarrow\infty} \f 1n\, \E\{\log\|h\| \,\mid\, h\in S^n\}   \, \qquad = & \ \ \f 1n \int_G \log\|M\| \,d\mu^n \qquad  \\
       \g_1+\g_2 \ \  = &  \  \ \ \lim_{n\rightarrow\infty} \f 1n \,\E\{\log\| \wedge^2h\| \,\mid\, h\in S^n\}  \ \  = &  \f 1n \int_G \log\|\wedge^2M\| \,d\mu^n\,,  \\
    \endaligned
\end{equation}
where $\wedge^2 g$ is the operator on $\wedge^2\R^d$ given by $x\wedge y \mapsto gx \wedge gy$ and $\|\cdot\|$ denotes the operator norm (the general Lyapunov exponents are likewise defined inductively through higher exterior powers).  Not only do these limits exist, but in fact a theorem of Furstenberg and Kesten \cite{FK} asserts that the individual terms in the above sets are close to those limits with probability one as $n\rightarrow\infty$.   Under assumptions {\bf A1} and {\bf A2} one has separation between these top two Lyapunov exponents:
\begin{equation}\label{lyapgap}
    \g_1 \ > \ \g_2
\end{equation}
(\cite[Theorem III.6.1]{bougerol}).  We remark that computing or even approximating the Lyapunov exponents is in general difficult \cite{logic}.

We shall use the following variant of (\ref{lyap}), which involves the action of a random product on $\RP^{d-1}$.

\begin{proposition}(Furstenberg \cite{furst}; see \cite[Corollary III.3.4.(iii)]{bougerol}.)
Under  assumption {\bf A2} one has that
\begin{equation}\label{projlyap1}
    \f 1n \,\E\{\log \|h x\|\,\mid\, h\in S^n\} \ \ = \ \ \f 1n \int_G \log\(\f{\|Mx\|}{\|x\|}\)\,d\mu^n \ \ \longrightarrow \ \ \g_1
\end{equation}
uniformly for $x\in \RP^{d-1}$.
\end{proposition}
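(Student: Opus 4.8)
The plan is to read $\log(\|Mx\|/\|x\|)$ as an additive cocycle over the projective action of $G$ and thereby reduce the statement to the convergence of the occupation measures of the $\mu$-driven Markov chain on $\RP^{d-1}$, all of whose weak-$*$ limit points are forced to be the unique $\mu$-invariant measure $\nu$ of (\ref{muinvnu}); this follows the treatment in \cite{bougerol}. First I would fix a unit vector representing each $x\in\RP^{d-1}$ and set $\sigma(M,x)=\log(\|Mx\|/\|x\|)$, which is well defined on $\RP^{d-1}$ and satisfies the cocycle identity $\sigma(M_1M_2,x)=\sigma(M_1,M_2x)+\sigma(M_2,x)$, where $M_2x$ denotes the image of $x$ under the projective action. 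Put $\phi_n(x)=\int_G\sigma(M,x)\,d\mu^n(M)$, so that the left-hand side of (\ref{projlyap1}) is $\tfrac1n\phi_n(x)$. Since $S$ is finite with invertible entries, $\phi_1$ is continuous on the compact space $\RP^{d-1}$, and $|\phi_n(x)|\le nC_0$ for all $n$ and $x$, with $C_0=\max_i\max\{|\log\|g_i\||,\,|\log\|g_i^{-1}\||\}$.

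Next I would telescope. Splitting off the leftmost factor of a length-$n$ product via the cocycle identity gives $\phi_n(x)=\phi_{n-1}(x)+\int_{\RP^{d-1}}\phi_1\,d(\mu^{n-1}\ast\delta_x)$, where $\mu^k\ast\delta_x$ is the law of the chain started at $x$ after $k$ steps; since $\phi_0\equiv0$, iteration yields
\[
\tfrac1n\,\phi_n(x)\ =\ \int_{\RP^{d-1}}\phi_1\,d\rho_{n,x},\qquad \rho_{n,x}\ :=\ \tfrac1n\sum_{k=0}^{n-1}\mu^k\ast\delta_x .
\]
Because $\mu$ is finitely supported, $\rho\mapsto\mu\ast\rho$ is weak-$*$ continuous, and $\mu\ast\rho_{n,x}-\rho_{n,x}=\tfrac1n(\mu^n\ast\delta_x-\delta_x)\to0$; hence every weak-$*$ subsequential limit of $\rho_{n,x}$, taken along $n\to\infty$ with $x$ allowed to vary with $n$, is a $\mu$-invariant probability measure on $\RP^{d-1}$. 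By {\bf A1} and {\bf A2} there is exactly one such measure, namely $\nu$ (\cite[Theorem III.4.3.(iii)]{bougerol}). Consequently $\int_{\RP^{d-1}}\phi_1\,d\rho_{n,x}\to\int_{\RP^{d-1}}\phi_1\,d\nu$ \emph{uniformly} in $x$: if not, choose $\e>0$, $n_j\to\infty$, and $x_j$ with $|\int\phi_1\,d\rho_{n_j,x_j}-\int\phi_1\,d\nu|\ge\e$; by weak-$*$ compactness of the probability measures on the compact space $\RP^{d-1}$ a subsequence of $\rho_{n_j,x_j}$ converges, its limit is a $\mu$-invariant measure hence equals $\nu$, and continuity of $\phi_1$ contradicts the choice of the $x_j$.

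It then remains to identify the limit $\int_{\RP^{d-1}}\phi_1\,d\nu$ with $\g_1$. One inequality is immediate: for a unit vector $x$, $\|Mx\|\le\|M\|$, so $\phi_n(x)\le\int_G\log\|M\|\,d\mu^n(M)$; since $\int\phi_n\,d\nu=n\int\phi_1\,d\nu$ by the $\mu$-invariance of $\nu$, dividing by $n$ and letting $n\to\infty$ gives $\int\phi_1\,d\nu\le\g_1$ by (\ref{lyap}). The reverse inequality is Furstenberg's integral formula for the top Lyapunov exponent under strong irreducibility (\cite{furst}; see \cite[Ch.~III]{bougerol}). With {\bf A1} additionally in force (as it is throughout this section) one can also argue it directly: the normalized products $h/\|h\|$, $h\in S^n$, converge almost surely as $n\to\infty$ to rank-one matrices $u\,\xi^{T}$, and {\bf A2} prevents the distribution of the direction $\xi$ from being carried by any proper subspace, so for each fixed $x$ one has $\langle\xi,x\rangle\neq0$ almost surely, whence $\tfrac1n\log\|hx\|=\tfrac1n\log\|h\|+\tfrac1n\log|\langle\xi,x\rangle|+o(1)\to\g_1$ almost surely; the uniform bound $|\tfrac1n\phi_n(x)|\le C_0$ and dominated convergence then give $\tfrac1n\phi_n(x)\to\g_1$, so $\int\phi_1\,d\nu=\g_1$. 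Together with the uniform convergence of the previous paragraph, this is the Proposition.

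The main obstacle is precisely this last point: that $\|hx\|$ grows at the single rate $\g_1$ for \emph{every} initial line $x$, not merely for $\nu$-generic ones. This is the core of Furstenberg's theorem, and strong irreducibility {\bf A2} is indispensable for it --- for reducible $S$ (say block upper-triangular matrices whose lowest block expands slowly) a line lying in the slow invariant subspace grows strictly slower than $\g_1$ and the conclusion is false. Everything else --- the cocycle identity, the telescoping to occupation measures, uniqueness of $\nu$, and the passage from pointwise to uniform convergence via weak-$*$ compactness on the compact space $\RP^{d-1}$ --- is formal.
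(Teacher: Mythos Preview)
The paper does not supply its own proof of this Proposition; it is simply quoted from Furstenberg and the Bougerol book. Your argument is a correct reconstruction along standard lines: the cocycle identity for $\sigma(M,x)=\log(\|Mx\|/\|x\|)$, the telescoping of $\phi_n$ into Ces\`aro occupation measures $\rho_{n,x}=\tfrac1n\sum_{k=0}^{n-1}\mu^k\ast\delta_x$, the observation that every weak-$*$ limit of the $\rho_{n,x}$ is $\mu$-invariant, and the upgrade to uniformity in $x$ via sequential compactness of probability measures on the compact space $\RP^{d-1}$ are all carried out correctly.

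One point deserves sharpening. To conclude that all weak-$*$ limits coincide you invoke \emph{uniqueness} of the $\mu$-invariant measure via \cite[Theorem III.4.3(iii)]{bougerol}, but that theorem needs both {\bf A1} and {\bf A2}, whereas the Proposition is stated under {\bf A2} alone. Since {\bf A1}--{\bf A3} are standing assumptions throughout the section this is harmless here, and you acknowledge as much. If you want the argument to go through under {\bf A2} alone, note that Furstenberg's integral formula (which you already cite for the identification step) gives $\int_{\RP^{d-1}}\phi_1\,d\nu=\g_1$ for \emph{every} $\mu$-invariant probability $\nu$; hence even without uniqueness, the various weak-$*$ limit points of the $\rho_{n,x}$ --- whatever invariant measures they may be --- all integrate $\phi_1$ to the common value $\g_1$, and your compactness contradiction argument then runs verbatim. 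With that adjustment the reliance on {\bf A1} in the main body of the proof disappears, matching the hypothesis actually stated.
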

\noindent Consequently,
\begin{equation}\label{projlyap2}
    \lim_{n\rightarrow\infty}\sup_{x\neq 0}  \, \f 1n \int_G \log\(\f{\|Mx\|}{\|x\|}\)\,d\mu^n \ \ = \ \ \g_1\,.
\end{equation}
Following \cite[p.~55]{bougerol} we use the natural angular distance
\begin{equation}\label{dist}
    \d(x,y) \ \ = \ \ \f{\| x\wedge y\|}{\|x\|\,\|y\|} \ \ = \ \ \sqrt{1-\f{\langle x,y \rangle^2}{\|x\|^2\|y\|^2}}\,,
\end{equation} which is a metric
on $\RP^{d-1}$. It satisfies the following estimate:
\begin{proposition}\label{limsupprop} (See \cite[Proposition III.6.4(ii)]{bougerol}.) For any $x,y\in \RP^{d-1}$,
\begin{equation}\label{limsupassert}
\limsup_{n\rightarrow\infty} \f 1n \int_G \log\(\frac{\d(Mx,My)}{\d(x,y)}\)d\mu^n(M) \ \ \le \ \ \g_2\,-\,\g_1 \ < \ 0 \,.
\end{equation}
\end{proposition}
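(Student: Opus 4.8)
The plan is to split $\log\big(\d(Mx,My)/\d(x,y)\big)$ into one bivector term and two vector terms, and to recognize each piece as a quantity whose $n\to\infty$ asymptotics have already been recorded in this section. Throughout, $x,y$ are distinct points of $\RP^{d-1}$ (if $x=y$ the assertion is vacuous), and all the ratios below are independent of the chosen representative vectors.

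First I would combine the explicit formula (\ref{dist}) for $\d$ with the identity $Mx\wedge My=(\wedge^2 M)(x\wedge y)$. Since $x\wedge y\neq 0$ and, because $M$ is invertible, $Mx\wedge My\neq 0$, this gives the exact decomposition
\[
\log\(\f{\d(Mx,My)}{\d(x,y)}\)\ =\ \log\(\f{\|(\wedge^2 M)(x\wedge y)\|}{\|x\wedge y\|}\)\ -\ \log\(\f{\|Mx\|}{\|x\|}\)\ -\ \log\(\f{\|My\|}{\|y\|}\).
\]
Because $\mu^n$ is supported on the finite set $S^n$, each of the three summands on the right is a bounded (hence $\mu^n$-integrable) function of $M$, so one may integrate term by term and divide by $n$.

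For the bivector term I would use only the crude submultiplicative bound $\|(\wedge^2 M)(x\wedge y)\|\le\|\wedge^2 M\|\,\|x\wedge y\|$, which yields
\[
\f 1n\int_G\log\(\f{\|(\wedge^2 M)(x\wedge y)\|}{\|x\wedge y\|}\)d\mu^n(M)\ \le\ \f 1n\int_G\log\|\wedge^2 M\|\,d\mu^n(M)\ \longrightarrow\ \g_1+\g_2,
\]
the convergence being precisely the definition (\ref{lyap}) of $\g_1+\g_2$. For the two vector terms, the Furstenberg convergence (\ref{projlyap1}) gives $\f 1n\int_G\log(\|Mx\|/\|x\|)\,d\mu^n(M)\to\g_1$ and likewise with $y$ in place of $x$. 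Adding the three estimates and using that the $\limsup$ of a sum is at most the $\limsup$ of the first term plus the genuine limits of the other two, I obtain
\[
\limsup_{n\to\infty}\f 1n\int_G\log\(\f{\d(Mx,My)}{\d(x,y)}\)d\mu^n(M)\ \le\ (\g_1+\g_2)-\g_1-\g_1\ =\ \g_2-\g_1,
\]
and $\g_2-\g_1<0$ is exactly the Lyapunov gap (\ref{lyapgap}) guaranteed by {\bf A1} and {\bf A2}.

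I do not expect a genuine obstacle here once the cited inputs (\ref{projlyap1}) and (\ref{lyap}) are in hand; the only points needing a little care are the compatibility of the Euclidean norm on $\wedge^2\R^d$ used in (\ref{dist}) with the operator norm in (\ref{lyap}) (so that the first display is literally the claimed inequality) and the term-by-term integration, which is immediate from the finiteness of the support of $\mu^n$. I note in passing that if one wanted the bound uniformly in $x,y$ — which is not asserted — the vector terms would still be controlled by the uniformity in (\ref{projlyap1}), but the bivector term would then require a true Furstenberg-type statement for the $\wedge^2$-action rather than the crude operator-norm bound, and that would call for strong irreducibility of the $\wedge^2$-image of $T$, which is not among our standing hypotheses.
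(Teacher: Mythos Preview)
Your argument is correct and is essentially identical to the paper's own proof: both use the formula (\ref{dist}) to factor $\d(Mx,My)/\d(x,y)$ into a bivector ratio times two inverse vector ratios, bound the bivector ratio by $\|\wedge^2 M\|$, and then invoke (\ref{lyap}) for the $\wedge^2$ term and (\ref{projlyap1}) for the two vector terms. Your additional remarks on integrability and on what uniformity in $x,y$ would require are accurate but not needed for the statement as given.
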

\begin{proof}
By (\ref{dist})
\begin{equation}\label{lsp1}
\aligned
    \f{\d(Mx,My)}{\d(x,y)} \ \ = &  \ \
\f{\|M(x\wedge y)\|}{\|x\wedge y\|} \f{\|x\|}{\|Mx\|}  \f{\|y\|}{\|My\|}
\\
\le & \ \ \|\wedge^2M\| \f{\|x\|}{\|Mx\|}  \f{\|y\|}{\|My\|}\ , 
\\
\f 1n \log \f{\d(Mx,My)}{\d(x,y)} \ \ \le & \ \ \f 1n \log\|\wedge^2M\| \,-\, \f 1n \log\f{\|Mx\|}{\|x\|} \,-\, \f 1n \log\f{\|My\|}{\|y\|} \,.
\endaligned
\end{equation}
The proposition follows by integrating this inequality over $M$, and appealing to (\ref{lyap}) and (\ref{projlyap1}).
\end{proof}

\subsection{Cocycle integrals}

We have just seen that the integrand
\begin{equation}\label{cocycle1}
    s(M,(x,y)) \ \ = \ \ \log \frac{\d(Mx,My)}{\d(x,y)}
\end{equation}
in (\ref{limsupassert}) tends to be negative on $S^n$.  Our next goal is to show that the integral of an exponential of it is accordingly smaller than 1.
Writing $z$ as shorthand for $(x,y)$, define
\begin{equation}\label{cocycle2}
    {\mathcal S}(n) \ \ = \ \ \sup_z\int_G e^{\a s(M,z)}\,d\mu^n(M)\,,
\end{equation}
which exists for any $\a>0$ since $S$ is finite.  It is proven in \cite[p.~104]{bougerol} that \begin{equation}\label{cocycle3}
    {\mathcal S}(n+m) \ \ \le \ \ {\mathcal S}(n)\,{\mathcal S}(m)\, ,
\end{equation}
using the cocycle identity
\begin{equation}\label{cocycle4}
    s(g_1g_2,v) \ \ = \ \ s(g_1,g_2 v) \ + \ s(g_2,v)
\end{equation}
and a simple change of variables.
According to \cite[Lemma III.5.4]{bougerol}, any matrix $M\in G$ satisfies the inequality
\begin{equation}\label{wedgel}
    \Big| \, \log\|\wedge^2 M\| \,\Big|  \  \ \le  \  \ 2\,\ell(M) \, ,
\end{equation}
where
\begin{equation}\label{ell}
    \ell(M) \ \ = \ \ \max\{\log \|M\|,\, \log \| M\i\|,\, 0\}\,.
\end{equation} 
It follows from (\ref{lsp1}) that
\begin{equation}\label{cocycle5}
    s(M,z) \ \ \le \ \ \log\|\wedge^2 M\| \, + \, 2\log\|M\i\| \ \ \le \ \ 4\,\ell(M)\,.
\end{equation}
If $\ell_{max}$ denotes $\max\{\ell(g) | g\in S\}$, then
 \begin{equation}\label{cocycle6}
    s(M,z) \ \ \le  \ \ 4\,n\,\ell_{max}
 \end{equation}
 on $S^n=$ the support of $\mu^n$, independently of $z$.

\begin{proposition}\label{expprop}(See \cite[Theorem 1]{lepage} and \cite[Proposition V.2.3]{bougerol}.)
For $\a>0$ sufficiently small, there exists $n_0>0$  and $\rho<1$ such that
\begin{equation}\label{ep1}
    \int_G \(\f{\d(Mx,My)}{\d(x,y)}\)^\a d\mu^n(M) \ \ \le \ \ \rho^n
\end{equation}
 for all $x\neq y\in\RP^{d-1}$, and $n\ge n_0$.
\end{proposition}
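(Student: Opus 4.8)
The plan is to show that $\mathcal{S}(n)$, defined in (\ref{cocycle2}) as the supremum over $z=(x,y)$ of the left side of (\ref{ep1}), decays geometrically in $n$. By the submultiplicativity (\ref{cocycle3}), once one knows $\mathcal{S}(n_0)<1$ for a single $n_0$ it follows that $\mathcal{S}(kn_0+r)\le\mathcal{S}(n_0)^k\,\mathcal{S}(1)^r$ for $0\le r<n_0$, hence $\mathcal{S}(n)\le C\,\rho_0^{\,n}$ with $\rho_0:=\mathcal{S}(n_0)^{1/n_0}<1$ and $C$ depending only on $n_0$ and $\a$; choosing any $\rho\in(\rho_0,1)$ and taking $n$ large enough to absorb the constant $C$ yields $\mathcal{S}(n)\le\rho^{\,n}$, which (after relabelling the threshold) is the assertion, since the integral in (\ref{ep1}) is $\le\mathcal{S}(n)$. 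So the whole proof reduces to producing one pair $(n_0,\a)$ with $\mathcal{S}(n_0)<1$.

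For this I would proceed in two steps. \emph{Step 1 (the average of the cocycle is eventually negative, uniformly in $z$).} Integrating the last line of (\ref{lsp1}) against $\mu^n$ gives
\[
\frac1n\int_G s(M,z)\,d\mu^n(M)\ \le\ \frac1n\int_G\log\|\wedge^2M\|\,d\mu^n\ -\ \frac1n\int_G\log\frac{\|Mx\|}{\|x\|}\,d\mu^n\ -\ \frac1n\int_G\log\frac{\|My\|}{\|y\|}\,d\mu^n\,,
\]
and as $n\to\infty$ the right side tends to $(\g_1+\g_2)-\g_1-\g_1=\g_2-\g_1<0$ by (\ref{lyap}) and (\ref{projlyap1}); crucially this convergence is uniform in $z=(x,y)$, since the first term does not depend on $z$ and the limit in (\ref{projlyap1}) is uniform in $x\in\RP^{d-1}$. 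Fix $\varepsilon>0$ with $-\delta:=\g_2-\g_1+\varepsilon<0$ and an $n_0$ with $\int_G s(M,z)\,d\mu^{n_0}(M)\le-\delta\,n_0$ for all $z$.

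\emph{Step 2 (exponentiating a bounded cocycle with $\a$ small).} On the support $S^{n_0}$ of $\mu^{n_0}$ one has $|s(M,z)|\le 4n_0\ell_{max}=:T_0$ uniformly in $z$: the upper bound is (\ref{cocycle6}), and the lower bound follows by applying the same estimate to $M\i$, since the cocycle identity (\ref{cocycle4}) gives $-s(M,(x,y))=s(M\i,(Mx,My))\le 4\ell(M\i)\le 4n_0\ell_{max}$ (using $\ell(g\i)=\ell(g)$). Hence, using the elementary inequality $e^u\le 1+u+\a^2T_0^2e^{\a T_0}$ valid for $|u|\le\a T_0$,
\[
\int_G e^{\a s(M,z)}\,d\mu^{n_0}(M)\ \le\ 1\ +\ \a\int_G s(M,z)\,d\mu^{n_0}(M)\ +\ \a^2T_0^2e^{\a T_0}\ \le\ 1\ -\ \a\delta n_0\ +\ \a^2T_0^2e^{\a T_0}\,.
\]
For $\a>0$ small enough the last term is below $\tfrac12\a\delta n_0$, so $\mathcal{S}(n_0)\le 1-\tfrac12\a\delta n_0<1$, completing the argument.

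The main obstacle is Step 1: Proposition~\ref{limsupprop} only asserts negativity of $\limsup_n\frac1n\int s(M,z)\,d\mu^n$ for each \emph{fixed} $z$, whereas here the uniformity over all pairs $z=(x,y)$ is what lets us commit to a single $n_0$ that works; this uniformity is precisely what Furstenberg's uniform limit (\ref{projlyap1}) supplies. Step 2 is routine once $\a$ is permitted to be small — the gain $-\a\delta n_0$ is first order in $\a$ while the error from exponentiating the bounded cocycle $s$ is only second order — and it is the sole place where the smallness of $\a$ enters. (One could alternatively just cite the general Le~Page / Bougerol--Lacroix statements referenced in the proposition, but the argument above is self-contained given what has already been assembled in this section.)
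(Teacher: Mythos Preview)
Your proposal is correct and follows essentially the same route as the paper's proof: establish uniform negativity of $\frac1n\int s(M,z)\,d\mu^n$ via (\ref{lsp1}), (\ref{lyap}), and the uniform convergence in (\ref{projlyap1}); then Taylor-expand $e^{\a s}$ to show $\mathcal{S}(n_0)<1$ for some $n_0$ and small $\a$; then invoke submultiplicativity (\ref{cocycle3}). You are in fact slightly more careful than the paper on two small points: you explicitly flag that Proposition~\ref{limsupprop} as stated is only pointwise and that the needed uniformity comes from (\ref{projlyap1}), and you supply a lower bound $-s(M,z)=s(M^{-1},(Mx,My))\le 4\ell(M^{-1})=4\ell(M)$ so that $|s(M,z)|\le 4n_0\ell_{max}$, which the paper's application of $e^x\le 1+x+\tfrac{x^2}{2}e^{|x|}$ implicitly requires but for which only the upper bound (\ref{cocycle6}) is recorded.
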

\begin{proof}
The inequality
$$e^x \ \ \le \ \ 1 \,+\,x\,+\,\f{x^2}{2}e^{|x|}$$
and (\ref{cocycle6}) imply that
\begin{equation}\label{ep2}
    e^{\a s(M,z)} \ \ \le \ \ 1 \,+\, \a\, s(M,z) \,+\, 8\,\a^2 n^2\, \ell_{max}^2\, e^{4n\a \ell_{max}}
\end{equation}
for $M\in S^n$.
Thus the lefthand side of (\ref{ep1}), which is the integral of $e^{\a s(M,z)}d\mu^n(M)$ over $G$, is bounded by
\begin{equation}\label{ep3}
    1 \, + \, \a\int_G s(M,z)\,d\mu^n(M) \,+\, 8\,\a^2 n^2\, \ell_{max}^2\, e^{4n\a \ell_{max}}\,.
\end{equation}
Proposition~\ref{limsupprop} asserts that for any $\e>0$ there exists $n'$ sufficiently large so that
\begin{equation}\label{ep4}
    \sup_z\int_G s(M,z)\,d\mu^n(M) \ \ \le \ \ n(\g_2\,-\,\g_1\,+\,\e)
\end{equation}
for all $n\ge n'$, and so
\begin{equation}\label{ep5}
    {\mathcal S}(n) \ \ \le \ \ 1 \,+\, n \a  (\g_2-\g_1+\e)\,+\,8\,\a^2 n^2\, \ell_{max}^2\, e^{4n\a \ell_{max}}
\end{equation}
for such $n$.  In particular, if $\e$ and $\a$ are sufficiently small, the righthand side of (\ref{ep4}) is negative  and
 ${\mathcal S}(n')<1$.  Repeated applications of the  subadditivity property (\ref{cocycle3}) show that ${\mathcal S}(kn'+m) \le {\mathcal S}(n')^k{\mathcal S}(m)$ for $1\le m \le n'$, which   implies the proposition.
\end{proof}

\subsection{Estimate on $\mu^{j-1}\d_v(B_{r,i})$}

This subsection contains the mathematical core of the argument,  a 
  H\"older estimate relating the measures $\mu^{j-1}\d_v$ and $\nu$.  For any $\e>0$ and closed subset  $U\subset \RP^{d-1}$, define a function $f=f_{\e,U}$ on $\RP^{d-1}$ by
\begin{equation}\label{holder1}
    f(x) \ \ = \ \ \max\left\{1-\f{\d(x,U)}{\e},0\right\}.
\end{equation}

\begin{proposition}\label{holdbd}
For $0<\a<1$ the function $f$ satisfies the bound
\begin{equation}\label{hbd}
    \f{|f(x)-f(y)|}{\d(x,y)^\a} \ \ \le \ \ \e^{-\a}
\end{equation}
uniformly in $x,y\in \RP^{d-1}$.
\end{proposition}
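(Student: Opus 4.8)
The plan is to verify the Hölder bound \eqref{hbd} by the standard two-regime argument for a truncated, normalized distance function. First I would observe that $f=f_{\e,U}$ is built from the function $x\mapsto \d(x,U)=\inf_{u\in U}\d(x,u)$, which is $1$-Lipschitz with respect to the metric $\d$ on $\RP^{d-1}$ (this is the usual triangle-inequality argument: $\d(x,U)\le \d(x,y)+\d(y,U)$, hence $|\d(x,U)-\d(y,U)|\le \d(x,y)$; one should note $\d$ is indeed a metric, as recorded just after \eqref{dist}). Since $t\mapsto \max\{1-t/\e,0\}$ is $(1/\e)$-Lipschitz on $[0,\infty)$ and the composition of Lipschitz maps is Lipschitz with the product constant, $f$ is $(1/\e)$-Lipschitz in $\d$: $|f(x)-f(y)|\le \e^{-1}\d(x,y)$ for all $x,y$.

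Next I would split into two cases according to the size of $\d(x,y)$ relative to $\e$. If $\d(x,y)\le \e$, then using the Lipschitz bound and $0<\a<1$,
\[
\f{|f(x)-f(y)|}{\d(x,y)^\a}\ \le\ \f{\e^{-1}\d(x,y)}{\d(x,y)^\a}\ =\ \e^{-1}\d(x,y)^{1-\a}\ \le\ \e^{-1}\e^{1-\a}\ =\ \e^{-\a}.
\]
If instead $\d(x,y)>\e$, I would use the crude bound $|f(x)-f(y)|\le 1$ (which holds since $f$ takes values in $[0,1]$), giving
\[
\f{|f(x)-f(y)|}{\d(x,y)^\a}\ \le\ \f{1}{\d(x,y)^\a}\ <\ \f{1}{\e^\a}\ =\ \e^{-\a}.
\]
In both cases the quotient is at most $\e^{-\a}$, which is exactly \eqref{hbd}, and the bound is uniform in $x,y$ since no case used anything about $x,y$ beyond $\d(x,y)$ and the range of $f$.

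There is no real obstacle here; the only points requiring a word of care are (i) confirming that $\d(\cdot,U)$ is genuinely $1$-Lipschitz in the metric $\d$ rather than in the ambient Euclidean distance — this is immediate from the metric property of $\d$ on $\RP^{d-1}$ — and (ii) noting that the range bound $|f|\le 1$ is what rescues the large-$\d(x,y)$ regime, where the Lipschitz estimate by itself would be too weak. The restriction $0<\a<1$ is used only to guarantee $\d(x,y)^{1-\a}$ is an increasing function of $\d(x,y)$ so that it is bounded by $\e^{1-\a}$ on the range $\d(x,y)\le \e$; equivalently, it ensures $\e^{-\a}$ is the correct common bound in both regimes.
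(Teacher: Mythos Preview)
Your proof is correct. Both your argument and the paper's rely on the same two ingredients: the $1$-Lipschitz property $|\d(x,U)-\d(y,U)|\le \d(x,y)$ (which the paper records as \eqref{distanceineq}) and the fact that $f$ takes values in $[0,1]$. The difference is organizational. The paper splits into cases according to where $x$ and $y$ sit relative to $U$ and its $\e$-neighborhood (both inside, both outside, one of each, etc.), handling each pairing by a short direct computation. You instead first package the Lipschitz property into the single statement that $f$ is $(1/\e)$-Lipschitz, and then split only on whether $\d(x,y)\le\e$ or $\d(x,y)>\e$. Your decomposition is the standard ``Lipschitz plus bounded implies H\"older'' interpolation and is cleaner, with fewer cases; the paper's case analysis is more hands-on but arrives at the same bound through the same inequalities. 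Neither approach uses anything the other does not.
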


\noindent Note: the expression on the lefthand side of (\ref{holdbd}) appears in  \cite[p.~106]{lepage}, where it is use to create a Banach space norm.

\begin{proof}
The result is immediate if either $x$ and $y$ are both in $U$, or both distance at least $\e$ from $U$; likewise it is immediate if one of them  lies in $U$ and the other lies distance at least $\e$ from $U$.  We may therefore assume, without loss of generality, that $0<\d(x,U)<\e$.

If $y\in U$, the quotient equals $\e\i\d(x,U)^{1-\a}<\e^{-\a}$.
If $0<\d(y,U)<\e$,
$|f(x)-f(y)|/\d(x,y)^\a = \e\i|\d(x,U)-\d(y,U)|/\d(x,y)^{\a} \le \e\i|\d(x,U)-\d(y,U)|^{1-\a} \le \e^{-\a}$, using the inequality  
\begin{equation}\label{distanceineq}
    |\d(x,U)\,-\,\d(y,U)| \ \ \le \ \  \d(x,y)\,.
\end{equation}
For the remaining case $\d(y,U)\ge \e$ we again use (\ref{distanceineq}) to deduce $|f(x)-f(y)|/\d(x,y)^\a = \e\i|\e-\d(x,U)|/\d(x,y)^\a  \le \e\i|\e-\d(x,U)|^{1-\a}\le \e^{-\a}$.
\end{proof}

\begin{proposition}\label{holder}(See \cite[p.~107]{bougerol})
Consider the function $f$ defined in terms of the set $U$ and constant $\e>0$ in (\ref{holder1}).  For $\a$ sufficiently small, there exists $n_0>0$ and $\rho<1$ such that
\begin{equation}\label{holdprop}
    \int_G f(Mv)d\mu^n(M) \ - \ \int_{\RP^{d-1}} f(y)d\nu(y) \ \ \le \ \ \e^{-\a}\rho^n
\end{equation}
for all $n\ge n_0$.
\end{proposition}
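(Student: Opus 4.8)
The plan is to combine the $\mu$-invariance of the measure $\nu$ with the H\"older bound of Proposition~\ref{holdbd} and the exponential contraction of Proposition~\ref{expprop}. Since $\mu\ast\nu=\nu$ by (\ref{muinvnu}) and convolution is associative, an immediate induction gives $\mu^n\ast\nu=\nu$ for all $n\ge 1$. Hence, by the defining relation (\ref{muconvrho}),
$$\int_{\RP^{d-1}}f(y)\,d\nu(y) \ = \ \int_{\RP^{d-1}}f\,d(\mu^n\ast\nu) \ = \ \int_G\int_{\RP^{d-1}}f(My)\,d\nu(y)\,d\mu^n(M),$$
so that the left-hand side of (\ref{holdprop}) can be rewritten as
$$\int_G\int_{\RP^{d-1}}\bigl(f(Mv)-f(My)\bigr)\,d\nu(y)\,d\mu^n(M),$$
the interchange of integrals being justified by Fubini's theorem, since $\mu^n$ is a finite atomic measure ($S$ is finite), $\nu$ is a probability measure, and $f$ is bounded and continuous.

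Next I would estimate the integrand pointwise. Proposition~\ref{holdbd} gives $|f(Mv)-f(My)|\le \e^{-\a}\,\d(Mv,My)^\a$. Because the angular distance (\ref{dist}) never exceeds $1$, for $y\neq v$ we may write
$$\d(Mv,My)^\a \ = \ \d(v,y)^\a\left(\f{\d(Mv,My)}{\d(v,y)}\right)^\a \ \le \ \left(\f{\d(Mv,My)}{\d(v,y)}\right)^\a ,$$
while for $y=v$ the quantity $\d(Mv,My)$ vanishes identically and contributes nothing. Integrating in $M$ against $\mu^n$ and invoking Proposition~\ref{expprop} (valid for $\a$ small and $n\ge n_0$, with $\rho<1$), we obtain $\int_G \d(Mv,My)^\a\,d\mu^n(M)\le \rho^n$ for every fixed $y$. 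Feeding this back, the left-hand side of (\ref{holdprop}) is at most $\e^{-\a}\rho^n\int_{\RP^{d-1}}d\nu(y)=\e^{-\a}\rho^n$, since $\nu$ is a probability measure; this is exactly the asserted bound, and it holds with the same $\a$, $n_0$, and $\rho$ as in Proposition~\ref{expprop}.

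This argument is essentially bookkeeping once the earlier propositions are in hand. The only points requiring a little care are: (i) verifying that $\a$ can be chosen small enough for Proposition~\ref{expprop} while remaining in $(0,1)$ as required by Proposition~\ref{holdbd}; (ii) the degenerate case $y=v$, handled above; and (iii) the (routine) appeal to Fubini's theorem. I do not anticipate a genuine obstacle here: the substantive work has already been carried out in establishing the existence, uniqueness, and $\mu$-invariance of $\nu$, and in the contraction estimate of Proposition~\ref{expprop}, all of which we are entitled to assume.
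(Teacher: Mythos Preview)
Your argument is correct and follows essentially the same route as the paper: rewrite $\int f\,d\nu$ using $\mu^n\ast\nu=\nu$, express the difference as the double integral of $f(Mv)-f(My)$, bound by $\e^{-\a}\d(Mv,My)^\a$ via Proposition~\ref{holdbd}, use $\d(v,y)\le 1$ to insert the ratio $\d(Mv,My)/\d(v,y)$, and apply Proposition~\ref{expprop}. Your added remarks on Fubini, the degenerate point $y=v$, and the compatibility of the ranges of $\a$ are welcome but do not change the structure of the proof.
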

\begin{proof}
In fact, the present argument shows this inequality holds when the lefthand side of (\ref{holdprop}) is replaced by its absolute value, though we shall not need this.
After $\nu$ by $\mu^n\ast \nu = \nu$ in the second integral,  the lefthand side equals
\begin{equation}\label{hp1}
\aligned
  & \!\!\!\!\!\!\!\!\!\!\!\!   \int_G f(Mv)d\mu^n(M) \  - \ \int_{\RP^{d-1}}\int_G f(My)d\mu^n(M)d\nu(y)
    \\
    = & \ \ \int_{\RP^{d-1}}\int_G f(Mv)d\mu^n(M)d\nu(y) \ - \ \ \int_{\RP^{d-1}}\int_G f(My)d\mu^n(M)d\nu(y)
    \\
    = & \ \ \int_{\RP^{d-1}}\int_G \(f(Mv)-f(My)\) d\mu^n(M)d\nu(y)
    \\
    \le &  \ \ \int_{\RP^{d-1}}\int_G \(\sup_{x,y} \f{|f(x)-f(y)|}{\d(x,y)^\a}  \) \d(Mv,My)^\a d\mu^n(M)d\nu(y)
    \\
    \le & \ \ \int_{\RP^{d-1}}\int_G \(\sup_{x,y} \f{|f(x)-f(y)|}{\d(x,y)^\a}  \) \(\f{\d(Mv,My)}{\d(v,y)}\)^\a d\mu^n(M)d\nu(y)\, ,
\endaligned
\end{equation}
the last inequality holding because $\d(\cdot,\cdot)\le 1$.  The result now follows from Propositions~\ref{expprop} and \ref{holdbd}.
\end{proof}

We will eventually apply this to sets containing the $B_{r,i}$ from (\ref{bri}), which are all of the form
 \begin{equation}\label{uoftheform}
\{x\in \RP^{d-1} \, \mid\, \|Ax\|<\|x\|\}
\end{equation}
for some  $A\in GL(d,\R)$ of norm greater than 1.
Given such a matrix $A$, let $w=w_A\in \R^d$ be a unit vector such that $\|Aw\|=\|A\|$.

\begin{proposition}\label{innerprod}
$\|Ax\| \ < \ \|x\| \ \ \Longrightarrow \ \ \|\langle x,w \rangle A w\| \ \le \ \|x\|\,.$
\end{proposition}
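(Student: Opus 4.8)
The plan is to reduce the statement to a fact about the spectral decomposition of the positive-semidefinite symmetric matrix $A^TA$ (equivalently, the singular value decomposition of $A$). First observe that the asserted inequality is invariant under replacing $x$ by a nonzero scalar multiple — each of $\|Ax\|$, $\|x\|$, and $\langle x,w\rangle$ scales by the same factor — so it is legitimate to work with a representative vector $x\in\R^d$ rather than a point of $\RP^{d-1}$.

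Let $E\subset\R^d$ be the eigenspace of $A^TA$ for its largest eigenvalue $\|A\|^2$. Since $w$ is a unit vector with $\|Aw\|=\|A\|$, the identity $\|Aw\|^2=\langle A^TAw,w\rangle$ together with the variational characterization of the top eigenvalue forces $w\in E$. Now decompose $x=x_1+x_2$ with $x_1\in E$ and $x_2\in E^\perp$; since $w\in E$ we have $\langle x,w\rangle=\langle x_1,w\rangle$, and hence $|\langle x,w\rangle|\le\|x_1\|$ by Cauchy--Schwarz.

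The key step is that $Ax_1$ and $Ax_2$ are orthogonal: because $A^TA$ acts on $E$ as multiplication by $\|A\|^2$, one gets $\langle Ax_1,Ax_2\rangle=\langle A^TAx_1,x_2\rangle=\|A\|^2\langle x_1,x_2\rangle=0$. The Pythagorean theorem then gives $\|Ax\|^2=\|Ax_1\|^2+\|Ax_2\|^2\ge\|Ax_1\|^2=\|A\|^2\|x_1\|^2$. Feeding in the hypothesis $\|Ax\|<\|x\|$ and the bound $\|x_1\|\ge|\langle x,w\rangle|$ yields $\|x\|^2>\|A\|^2\|x_1\|^2\ge\|A\|^2\langle x,w\rangle^2=\||\langle x,w\rangle|\,Aw\|^2$, which is exactly the claimed inequality (in fact strictly).

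I do not expect a genuine obstacle: this is elementary linear algebra once one recognizes that the condition $\|Ax\|<\|x\|$ should be examined through $A^TA$. The only points needing mild care are the reduction from $\RP^{d-1}$ to $\R^d$ by scale invariance, and avoiding any assumption that the top singular value of $A$ is simple — which is why the argument is phrased in terms of the whole eigenspace $E$ rather than the single vector $w$ (so that the orthogonality $Ax_1\perp Ax_2$ is valid).
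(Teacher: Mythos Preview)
Your proof is correct and follows the same overall architecture as the paper's: decompose $x$ into a component along the top singular direction and an orthogonal remainder, show that $A$ sends these two pieces to orthogonal vectors, and apply Pythagoras to conclude $\|Ax\|\ge\|\langle x,w\rangle Aw\|$. The difference lies only in how the key orthogonality is justified. The paper splits $x=\langle x,w\rangle w+z$ with $z\perp w$ and observes that the function $t\mapsto\|A(w+tz)\|^2/\|w+tz\|^2$ attains its maximum at $t=0$, so its derivative there vanishes, forcing $\langle Aw,Az\rangle=0$. You instead invoke the spectral decomposition of $A^TA$, split $x=x_1+x_2$ along the full top eigenspace $E$ and its orthogonal complement, and get $\langle Ax_1,Ax_2\rangle=\langle A^TAx_1,x_2\rangle=\|A\|^2\langle x_1,x_2\rangle=0$ directly. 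Your route is slightly cleaner in that it avoids calculus and makes explicit that no simplicity assumption on the top singular value is needed (the paper's derivative argument in fact handles multiplicity too, since the ratio is then constant along $E$, but this is not stated). Both arguments establish the same inequality $\|Ax\|\ge|\langle x,w\rangle|\,\|A\|$, which is all that is used downstream.
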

\begin{proof}
Let $z$ be a vector perpendicular to $w$.  For all $t\in \R$ we have that \begin{equation}\label{ip1}
    \|Aw\|^2 \ \ \ge \ \ \f{\|A(w+tz)\|^2}{\|w+tz\|^2} \ \  = \ \ \f{\|Aw\|^2 \, + \, 2 t \langle Aw,Az\rangle \,  + \,  t^2\|Az\|^2}{\|w\|^2   \, + \,  t^2\|z\|^2}\,,
\end{equation}
and so this last expression must have a local maximum at $t=0$.  In particular, its $t$-derivative at $t=0$  must vanish, i.e., $\langle Aw,Az\rangle =0$.  Therefore if a vector $x\in\R^d$ is decomposed as $x=\langle x,w\rangle w +z$ for some $z\perp w$, then $Ax=A\langle x,w\rangle w +Az$  is again an orthogonal decomposition.  It follows that $\|Ax\| \ge \|A\langle x,w\rangle w \|$, proving the proposition.
\end{proof}

We now return to bounding $\mu^{j-1}\d_v(B_i)$ in order to get an error estimate in (\ref{errorlastt}).
The sets $B_{r,i}$ are of the form (\ref{uoftheform}), with $A=g_r\i g_i$. We now fix  $r$ and $i$.  By Proposition~\ref{innerprod},
\begin{equation}\label{uprime}
    B_{r,i} \ \ \subset \ \ U \ \ = \ \ \left\{ x \in \RP^{d-1} \, \mid \,
        \f{|\langle x,w \rangle|}{\|x\|} \le \|A\|\i   \right\}.
\end{equation}
Proposition~\ref{holder} now shows that
\begin{equation}\label{holdsays}
    \mu^{j-1}\d_v(B_{r,i}) \ \ \le \ \   \mu^{j-1}\d_v(U)  \ \ \le \ \ \int_{\RP^{d-1}}f(y)d\nu(y) \, + \, \e^{-\a}\rho^{j-1}\, ,
\end{equation}
where $\e>0$ is arbitrary and  $f=f_{\e,U}$ is the function (\ref{holder1}). The last integral is bounded by $\nu(U')$, where
\begin{equation}\label{u2prime}
\aligned
    U' \ \ =  & \ \ \{   x \in \RP^{d-1} \,\mid\, \d(x,U)<\e         \} \\
   = & \ \ \{ x \in \RP^{d-1} \,\mid\, \exists y \text{~with~}\d(x,y)<\e\text{~and~} \f{|\langle y,w \rangle|}{\|y\|} \le \|A\|\i    \}
    \,.
   \endaligned
\end{equation}  Here $w$, as above, represents a unit vector such that $\|Aw\|=\|A\|$.  Using (\ref{dist}), this last condition on $|\langle y,w\rangle|$ can be restated as   $\d(y,w)\ge \sqrt{1-\|A\|^{-2}}$.  $U'$ is in turn contained in the set
\begin{equation}\label{u3prime}
\aligned
    U'' \ \ = & \ \ \{  x \in \RP^{d-1} \,\mid\, \d(x,w) \ge \sqrt{1-\|A\|^{-2}}-\e\} \\
    = & \ \ \{  x \in \RP^{d-1} \,\mid\, \f{\langle x,w \rangle}{\|x\|} \le \sqrt{1-(\sqrt{1-\|A\|^{-2}}-\e)^2}\}
\endaligned
\end{equation}
by the triangle inequality.

We now quote a result of Guivarc'h and Raugi (see \cite[Theorem VI.2.1]{bougerol}) which immediately implies a bound on the $\nu$-measure of $U''$ through the Chebyshev inequality.  The comments in the proof of this  Theorem on \cite[p.~156]{bougerol} indicate that the exponent $\a$ has the same source as the one in Proposition~\ref{expprop} above, and thus may be taken to have the same value.

\begin{theorem}\label{guivarch}(Guivarc'h and Raugi)
Under assumptions {\bf A1} and {\bf A2} there exists  constants $\a>0$ and $K>0$ such that
\begin{equation}\label{guivform}
    \int_{\RP^{d-1}}\left| \f{\langle x,y \rangle}{\|x\|}
      \right|^{-\a}d\nu(x)\ \ \le \ \ K
\end{equation}
uniformly in $y$.
\end{theorem}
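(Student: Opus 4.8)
\noindent\emph{Proof proposal.} The plan is to reduce (\ref{guivform}) to a \emph{non-concentration estimate for $\nu$ near projective hyperplanes}: that there are constants $\alpha>0$ and $C>0$ for which
\begin{equation}\label{gr-slab}
    \sup_{\|y\|=1}\ \nu\Big(\big\{\, x\in\RP^{d-1} \ \big|\  |\langle x,y\rangle|\,\le\, \e\,\|x\| \,\big\}\Big) \ \ \le \ \ C\,\e^{\alpha}\qquad(\e>0).
\end{equation}
Granting (\ref{gr-slab}), the theorem follows from the layer-cake identity $\int g\,d\nu=\int_0^\infty \nu(\{g>\lambda\})\,d\lambda$ applied to $g(x)=|\langle x,y\rangle/\|x\||^{-\alpha'}$: the superlevel set $\{g>\lambda\}$ is exactly a slab of half-width $\lambda^{-1/\alpha'}$, so (\ref{gr-slab}) makes the integral converge, uniformly in $y$, whenever $\alpha'<\alpha$. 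It should be stressed that (\ref{gr-slab}) is strictly stronger than H\"older regularity of $\nu$ on balls: a slab of half-width $\e$ is a union of $\asymp\e^{-(d-2)}$ balls of radius $\e$, so a bound $\nu(B(z,\e))\lesssim\e^{\beta}$ only gives $\nu(\mathrm{slab})\lesssim\e^{\beta-(d-2)}$, which is useless when $d\ge 3$ and $\beta$ is small (as it is here). Thus (\ref{gr-slab}) needs an argument of its own.

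For (\ref{gr-slab}) I would run a transfer-operator argument in the style of Le~Page, powered by the exponential contraction already established in Proposition~\ref{expprop}. Work on the Banach space $\mathcal H_\alpha$ of functions on $\RP^{d-1}$ with finite norm $\|f\|_\infty+\sup_{x\ne y}|f(x)-f(y)|/\d(x,y)^{\alpha}$ --- the seminorm appearing in Propositions~\ref{holdbd} and \ref{holder} --- and consider the twisted operators $(P_\alpha f)(x)=\int_G(\|Mx\|/\|x\|)^{\alpha}\,f(Mx)\,d\mu(M)$. Estimate (\ref{ep1}) gives a Lasota--Yorke type inequality, so for $\alpha$ small $P_\alpha$ is quasi-compact on $\mathcal H_\alpha$ with a spectral gap; since $P_0$ is the Markov operator (dominant eigenvalue $1$, eigenmeasure $\nu$), analytic perturbation yields, for $\alpha$ near $0$, a simple dominant eigenvalue $k(\alpha)$, a positive eigenfunction $r_\alpha$, and a probability eigenmeasure $\nu_\alpha$, with $k(0)=1$, $r_0\equiv 1$, $\nu_0=\nu$. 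Applying $P_\alpha^n$ to the functions $f_{\e,U}$ of (\ref{holder1}) with $U$ a slab about $y^{\perp}$, using the spectral gap to pass to the eigendata, and finishing with a Chebyshev step converts the \emph{strict} Lyapunov gap (\ref{lyapgap}) into the power bound (\ref{gr-slab}), with the same exponent $\alpha$ as in Proposition~\ref{expprop} --- consistent with the remark preceding the theorem. (Intuitively: realizing $X\sim\nu$ as the almost sure limit of the directions of the backward products $M_1\cdots M_n\cdot x_0$, and using $\langle M_1\cdots M_n x_0,\,y\rangle=\langle x_0,\,M_n^{T}\cdots M_1^{T}y\rangle$ together with the fact that $\|M_1\cdots M_n x_0\|$ and $\|M_1\cdots M_n\|=\|M_n^{T}\cdots M_1^{T}\|$ agree to first exponential order, the event $|\langle X,y\rangle|/\|X\|<\e$ forces the transposed product to expand $y$ exponentially less than its operator norm, which is exponentially rare by the Lyapunov gap.)

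The main obstacle is that this is genuinely a \emph{coupled} statement: the smallness of $\nu$ near $y^{\perp}$ is governed by how the best-expanding direction of a long random product avoids $y^{\perp}$, which is itself a non-concentration estimate --- now for the transposed walk $\langle g_1^{T},\dots,g_k^{T}\rangle$ and its stationary measure, which also satisfy {\bf A1}--{\bf A2}. One must therefore either prove (\ref{gr-slab}) simultaneously for a whole conjugate family of stationary measures, or (as in the transfer-operator formulation above) fold everything into a single quasi-compactness/spectral-gap statement that packages both at once. Making the relevant contraction strictly less than $1$ forces $\alpha$ to be small relative to the gap $\g_1-\g_2$ and the number of steps $n$ to be large; this is precisely where {\bf A1} (the rank-one contraction, hence quasi-compactness of $P_\alpha$ and a simple dominant eigenvalue) and {\bf A2} (strong irreducibility, hence uniqueness of $\nu$ and the fact that it assigns zero mass to every proper subspace) enter essentially. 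The remaining ingredients --- the layer-cake reduction, the elementary singular-value inequality $\|M^{T}y\|\ge\|M^{T}\|\,|\langle y,w\rangle|$ for a unit $w$ with $\|M^{T}w\|=\|M^{T}\|$, and the bookkeeping with the functions $f_{\e,U}$ --- are routine.
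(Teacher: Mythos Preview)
The paper does not prove this theorem: it is explicitly quoted as a result of Guivarc'h and Raugi, with a pointer to \cite[Theorem~VI.2.1]{bougerol}, and is used as a black box (via Chebyshev) to bound $\nu(U'')$. The only additional information the paper records is the remark, drawn from \cite[p.~156]{bougerol}, that the exponent $\alpha$ arises from the same contraction estimate as in Proposition~\ref{expprop} and may therefore be taken equal to it. So there is no ``paper's own proof'' to compare your proposal to.

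That said, your sketch is broadly in line with how the result is actually established in the cited sources. The layer-cake reduction from (\ref{guivform}) to the slab estimate (\ref{gr-slab}) is correct and standard, and you are right to flag that a naive H\"older-on-balls bound is insufficient in dimension $d\ge 3$. Your transfer-operator outline --- quasi-compactness of the twisted operator $P_\alpha$ on the H\"older space, spectral perturbation off $P_0$, and extraction of the power decay from the Lyapunov gap $\gamma_1>\gamma_2$ --- is essentially the Le~Page machinery that underlies the Guivarc'h--Raugi argument, and it is consistent with the paper's remark that the same $\alpha$ appears here as in Proposition~\ref{expprop}. One point to be careful about: the route in \cite{bougerol} does not literally apply $P_\alpha^n$ to the bump functions $f_{\e,U}$ and then ``finish with a Chebyshev step'' as you describe; rather, one proves directly that the stationary (or eigen-) measure integrates $|\langle x,y\rangle|^{-\alpha}$ uniformly in $y$, by combining the cocycle $\sigma(M,x)=\log(\|Mx\|/\|x\|)$ with the contraction estimate and the duality with the transposed walk that you allude to. Your intuitive paragraph about the transposed products captures the right mechanism, but packaging it rigorously requires more than you have written --- in particular, the uniformity in $y$ is where the strong irreducibility of the transposed semigroup does real work. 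As a proof \emph{proposal} this is a faithful roadmap; as a proof it would still need the details of the quasi-compactness and perturbation steps filled in from \cite{lepage} or \cite{bougerol}.
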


\noindent
Applying the Chebyshev inequality to this with $y=w$, one gets
\begin{equation}\label{cheby}
\int_{\RP^{d-1}}f(y)d\nu(y) \ \ \le \ \ \nu(U'') \ \ \le \ \ K \(1-(\sqrt{1-\|A\|^{-2}}-\e)^2\)^{\a/2}\,.
\end{equation}
Therefore using (\ref{holdsays})  and assumption {\bf A3}, we bounded the $\operatorname{Prob}_{\text{Error}}$ probability from (\ref{errorlastt}) by
\begin{equation}\label{holdsays2}
\operatorname{Prob}_{\text{Error}} \ \ \le  \ \ \f 1k \sum_{\stackrel{\scriptstyle{t< j\le L}}{1\le r\neq i\le k}}   \left[ K\(1-(\sqrt{1-N^{-2}}-\e)^2\)^{\a/2}\,+\,\e^{-\a}\rho^{j-1} \right ].
\end{equation}
The expression inside the large parentheses is
$$
\f{1}{N^2}\,-\,\e^2\,+\,2\,\e\sqrt{1-\f{1}{N^2}} \ \ < \ \ \f{1}{N^2}\,+\,2\,\e\,.
$$
We now specify $\e$ to be $\f{3}{2N^2}$, so that the error is bounded by
\begin{equation}\label{holdsays3}
\operatorname{Prob}_{\text{Error}} \ \ \le \ \ \f 1k \sum_{\stackrel{\scriptstyle{t< j\le L}}{1\le r\neq i\le k}}   \left( K2^{\a}N^{-\a}\,+\,\e^{-\a}\rho^{j-1} \right).
\end{equation}
Take $t=\lceil 1+\f{\log{(3^\a K N^{-3\a})}}{\log \rho} \rceil$, so that
\begin{equation}\label{domination}
    K2^\a N^{-\a} \ \ > \ \ \e^{-\a}\rho^{j-1} \ \ \ \ \text{for~}j \ \ge \ t
\end{equation}
and
\begin{equation}\label{holdsays4}
    \operatorname{Prob}_{\text{Error}} \ \ \le \ \ \f{1}{k}\cdot(L-t)k(k-1)\cdot 2^{\a+1} K N^{-\a}\,.
\end{equation}
This completes the proof of Theorem~\ref{positiveresult}.

\subsection{Numerical Examples}\label{numericalexample}

\subsubsection*{Example 1:~where Norm Reduction works well}

We now present an example of the algorithm in practice, for dimension $d=3$ and the generating set $S=\{g_1,g_2,g_3\}$, where
\begin{equation}\label{explicS}
    g_1 \ = \ \(\begin{smallmatrix}
                  -9 & -59 & 30 \\
                  11 & 66 & -32 \\
                  3 & 21 & -11 \\
                \end{smallmatrix}
    \) \  , \ \ \
     g_2 \ = \ \(\begin{smallmatrix}
                  444& -31 & -363 \\
                  -110 & 7 & 90 \\
                  -1271 & 90 & 1039 \\
                \end{smallmatrix}
    \)
     \  , \ \ \ \text{and} \ \
      g_3 \ = \ \(\begin{smallmatrix}
                  9 & 31 & 33 \\
                  -91 & -303 & -310 \\
                  -35 & -116 & -118 \\
                \end{smallmatrix}
    \)\!.
\end{equation}
\begin{table}[h]
  \centering
\begin{tabular}{|c|c|c|}
 \hline
 $L$ & Number of Attempts & Number of Successes \\
  \hline
  2 & 10,000 & 10,000 \\
  10 & 10,000 & 9,998 \\
  50 & 10,000 & 9,978 \\
  100 & 10,000 & 9,963 \\
  200 & 10,000 & 9,936 \\
  1,000 & 1,000 & 1,000 \\
  \hline
\end{tabular}
  \caption{Numerical results with generating set $S$ from (\ref{explicS}).}\label{resultstable}
\end{table}
\noindent
These matrices were chosen randomly among those with integral entries in  a bounded range. In all our tests we ran the algorithm with the parameter $t=0$, i.e., not allowing for brute force search for the final steps.
The parameter $N$ in this example is $\approx\!12157.1$.
We ran several numerical trials of the Norm Reduction Algorithm (\ref{NRA}) on the Word Problem on Vectors (\ref{WordProblemonVectors})  with the vector $v=(1,0,0)$, almost all of which were successful (see Table~\ref{resultstable}).

The error term (\ref{holdsays4}) is bounded by the one given in Theorem~\ref{positiveresult} if $C$ is taken to be $4K$.
In this typical example, the invariant measure $\nu$  and its approximations $\mu^n\ast \d_v$ are supported near the eigenvectors for the $g_i$ corresponding to their maximal eigenvalue.  Recall that the constant $K$ comes from the measure of the set $U''$, which in (\ref{u3prime}) is related to points in $\RP^2$ which have $\d$-distance very close to 1 from the direction of maximal stretching of the six matrices $g_r\i g_i$.
We computed that these 18 pairs of $\d$-distances range between .33 and .98, far from 1 on the scale of $1/N$.  Since $C$ can be large only if these distances are much closer to 1, we concluded that $C$ is small -- under some heuristics, we computed its value to be below 7.

To estimate the value of $\a$, we recall its origin in Proposition~\ref{expprop} comes from bounds on the quantities ${\cal S}(n)$ (\ref{cocycle2}).  We numerically estimated that  ${\cal S}(2)<.83$ for $\a=0.4$.  This was done by approximating that maximum using a mesh.  While that is no guarantee of an accurate estimate for the maximum, it is worth noting that the values to be maximized were typically much smaller.  Also,  using ${\cal S}(n)$ for larger values of $n$ would result in a better estimate for $\a$.
With this value of $\a=.4$, the probability in Theorem~\ref{positiveresult} is  less than 1 only for small values  of $L-t$.  However, that estimate is certainly an overestimate for other reasons:~for one thing, the proof   estimates the error probability at each step, and  multiplies this individual estimate by the number of steps to obtain the final estimate.  The actual error probability is likely to be far smaller.  The combination of this potential  to improve the estimates, along with the excellent performance of the Norm Reduction Algorithm (\ref{NRA}) in practice, demonstrates its usefulness in attacking the Word Problem on Vectors (\ref{WordProblemonVectors}).

\subsubsection*{Example 2:~where Norm Reduction does not work well}

The algorithm does not perform well when one of the generators is orthogonal.  
In this example we take $S'=\{g_1',g_2,g_3\}$, where $g_1'=\(\begin{smallmatrix}
                  0 & 0 & 1 \\
                  0 & 1 & 0 \\
                  1 & 0 & 0 \\
                \end{smallmatrix}
    \)$ and $g_2$, $g_3$ are as defined in (\ref{explicS}).  With this one change (but otherwise the same conditions as in Example 1) the outcomes were much worse, and are summarized in Table~\ref{badtable}.
    \begin{table}[h] 
  \centering
\begin{tabular}{|c|c|c|}
 \hline
 $L$ & Number of Attempts & Number of Successes \\
  \hline
  2 & 10,000 & 10,000 \\
  10 & 10,000 & 4,404 \\
  50 & 10,000 & 86 \\
  100 & 10,000 & 2 \\
  200 & 10,000 & 0 \\
  1000 & 1000 & 0 \\
  \hline
\end{tabular}
  \caption{Numerical results with generating set $S'$.}\label{badtable}
\end{table}

\section{Rounding and the Traveling Salesman Problem}\label{tspsec}

In this section we show how algorithms to solve the Closest Group Element Problem (\ref{CGEP}) can be easily converted to solve the Traveling Salesman Problem (\textsc{tsp}), and in particular prove Theorem~\ref{negthm}.

\begin{definition}\label{tsp} {\em Traveling Salesman Problem (on graphs)}.  Given a complete graph on $n$ vertices whose edges have positive integer weights, find a Hamiltonian cycle which has minimal total weight (i.e., sum of its edge weights).
\end{definition}

\noindent The above formulation is  more general than the metric \textsc{tsp} problem, in that the edge weights  do not need to obey the triangle inequality.  The \textsc{tsp} problem is NP-hard, as is the simpler problem of finding a Hamiltonian cycle whose total weight is within a constant factor of the minimum \cite[Theorem~3.6]{vazirani}.

We shall now describe how to convert any instance of \textsc{tsp}  into a Closest Group Element Problem (\ref{CGEP}).  First we set some notation for the \textsc{tsp} problem.  Let $w_e=w_{ij}=w_{ji}$ be the weight of the directed edge $e=(i,j)$ connecting  the $i$-th and $j$-th vertices. Let $m$ be an {\em a priori} lower bound for the total weight of the shortest Hamiltonian cycle (for example, $m$ can be $n$ times the lowest edge weight), and $M$ be an upper bound (for example, the weight of any Hamiltonian cycle).  Let $m_0$ denote the minimal total weight, which is unknown (and hence which we do not use in setting parameters).  Since the weights are positive integers, one may of course assume that $m_0,m\ge 1$.
The edge weight unit can be rescaled without affecting the solution to the \textsc{tsp} problem:~accordingly we shall replace the above parameters by $mT,MT$, and $m_0T$,where $T>0$ is a parameter that will be chosen later.  After this rescaling, one has that
\begin{equation}\label{isolatedmin}
    \text{any cycle  weight less than $m_0T+T$ is minimal.}
\end{equation}
In particular, there is no loss of generality in assuming that $M\ge m_0+1$.
Given an edge $e=(i,j)$, let $v_e$ denote the row vector of length $n$ which has all zeroes except for $K$'s in positions $i$ and $j$, where $K$ is a parameter that will be chosen later.  Let $E_{ij}$ denote the $n\times n$ matrix which has all 0 entries except a 1 in the $(i,j)$-th position.  Let $\b>\a\ge 0$ be parameters (to be specified later), and $M_e=M_{ij}=\a I+\b E_{ij}$.  We set $d=2n+3$ and define $d\times d$ matrices for each directed edge by
\begin{equation}\label{gedge}
    g_e \ \  = \ \ g_{ij} \ \ = \ \ \(\begin{smallmatrix}
    M_e & & & & \\
    & 1 & v_e & & \\
    & & I_{n\times n} & & \\
    & & & 1 & w_e \\
    & & & & 1
    \end{smallmatrix}\)
\end{equation}
(the blocks in this matrix are of sizes $n$, $1$, $n$, $1$, and $1$, respectively; we have as well used the convention that blank entries are zero).
Note that $E_{ij}\neq E_{ji}$, and consequently $M_{ij}\neq M_{ji}$ and $g_{ij}\neq g_{ji}$.  The Zariski closure of the group (or semigroup) generated by $\{g_{i,j}|i\neq j\}$ contains $GL_n(\R)$, embedded into the $n\times n$ block in the upper left corner, and satisfies the large dimensionality constraint of Section 1.

If $h_1,\ldots,h_\ell$ are all square matrices of the same size, let $\prod_{i\le \ell}h_i$ denote the product $h_1\cdots h_\ell$.
If $e_1,\ldots,e_\ell$ are edges, then
\begin{equation}\label{multlaw}
    g_{e_1}g_{e_2}\cdots g_{e_\ell} \ \ = \ \ \(\begin{smallmatrix}
    \prod\!M_{e_r} & & & & \\
    & 1 & \sum\!v_{e_r} & & \\
    & & I_{n\times n} & & \\
    & & & 1 & \sum\!w_{e_r} \\
    & & & & 1
    \end{smallmatrix}\).
\end{equation}
We shall now see how features of this matrix are related to the total weights of Hamiltonian cycles.  First of all, $\sum_{r\le \ell} v_{e_r}$ equals $[2K\,\ldots\,2K]$ (i.e., a vector of all $2K$'s) if and only if the edges $e_1,\ldots,e_\ell$ touch each vertex exactly twice.  The entry $\sum_{r\le \ell} w_{e_r}$ is of course the total weight of the path, if indeed $e_1,\ldots,e_r$ trace out a path.  The product
\begin{equation}\label{Mer}
    \prod_{r\le \ell}M_{e_r} \ \ = \ \ \prod_{r\le \ell}\(\a I \,+\,\b E_{i_rj_r}\) \ \   = \ \ \sum_{(\e_1,\ldots,\e_\ell)\in\{0,1\}^\ell}
    \a^{\ell-(\e_1+\cdots+\e_\ell)}\b^{\e_1+\cdots+\e_\ell} \prod_{r\le \ell} E_{i_rj_r}^{\ \ \ \e_r}
\end{equation}
helps detect such a path.  The last product is zero unless the edges $e_r$ for which $\e_r=1$ trace out a connected path; if they do, the product equals $E_{ij}$, where $i$ is the first value of $i_r$ for which $\e_r=1$ and $j$ is  the last value of $j_r$ for which $e_r=1$.  Note that if  $\a=0$, the only nonzero term is the one for $\e_1=\e_2=\cdots=\e_\ell=1$: then the product $\prod_{r\le \ell} M_{e_r}=\b^\ell E_{i_1j_\ell}$ if the edges $e_1,e_2,\ldots,e_\ell$ trace out a connected path, but is zero otherwise.  Thus in the extreme case $\a=0$, tracing out a connected path is equivalent to the nonvanishing of this product.  Unfortunately, however,  the matrices are only invertible if $\a>0$.  We will mainly be concerned with the case of $\a>0$ because of its relevance to the Closest Group Element Problem (\ref{CGEP}), but include some comments about the $\a=0$ case as well.  In fact, the extra parameters $\a$ and $\beta$ are needed simply to adapt features of the simpler $\a=0$ case to noninvertible matrices.

\begin{proposition}\label{crudedetect}The $(i,j)$-th entry of $\prod_{r\le \ell} M_{e_r}$ satisfies the bound
\begin{equation}\label{cd1}
 \(\prod_{r\le \ell} M_{e_r}\)_{ij}       \ \ \le \ \ \a\,\b^{\ell-1}\,2^\ell
\end{equation}
if the edges $e_1,e_2,\ldots,e_\ell$ do not trace out a path,
and
\begin{equation}\label{cd2}
    \(\prod_{r\le \ell} M_{e_r}\)_{ij} \ -\ \b^\ell\,\d_{i=i_1}\,\d_{j=j_\ell}           \ \ \le \ \ \a\,\b^{\ell-1}\,2^\ell
\end{equation}
if they do.
\end{proposition}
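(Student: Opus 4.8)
The plan is to expand $\prod_{r\le\ell}M_{e_r}$ exactly as in (\ref{Mer}) and to estimate the resulting $2^\ell$ summands entrywise. The starting point is the matrix-unit multiplication rule $E_{ab}E_{cd}=\d_{b=c}E_{ad}$; iterating it shows that for every multi-index $\e=(\e_1,\ldots,\e_\ell)\in\{0,1\}^\ell$, writing $|\e|=\e_1+\cdots+\e_\ell$, the product $\prod_{r\le\ell}E_{i_rj_r}^{\e_r}$ is either the zero matrix or a single matrix unit $E_{pq}$; in particular each of its entries lies in $\{0,1\}$. Moreover the top term $\e=(1,\ldots,1)$ equals $\prod_{r\le\ell}E_{i_rj_r}=\d_{j_1=i_2}\cdots\d_{j_{\ell-1}=i_\ell}\,E_{i_1j_\ell}$, which is $E_{i_1j_\ell}$ precisely when $j_r=i_{r+1}$ for $1\le r<\ell$ — that is, precisely when $e_1,\ldots,e_\ell$ trace out a directed path — and vanishes otherwise.

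Next I would record the elementary coefficient bound that drives both inequalities: since $\b>\a\ge 0$, any $\e$ with $|\e|\le\ell-1$ satisfies $\a^{\ell-|\e|}\b^{|\e|}=\a\cdot\a^{\ell-1-|\e|}\b^{|\e|}\le\a\,\b^{\ell-1-|\e|}\b^{|\e|}=\a\,\b^{\ell-1}$. Taking $(i,j)$-th entries in (\ref{Mer}), each summand with $|\e|\le\ell-1$ therefore contributes at most $\a\,\b^{\ell-1}$ to $\left(\prod_{r\le\ell}M_{e_r}\right)_{ij}$; since every contribution is nonnegative, the $(i,j)$-th entry of the sum over all such multi-indices is at most $2^\ell\a\,\b^{\ell-1}$.

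The two cases of the proposition now follow immediately. If $e_1,\ldots,e_\ell$ do not trace out a path, the top term of (\ref{Mer}) vanishes by the first paragraph, so \emph{every} surviving summand has $|\e|\le\ell-1$ and the bound above gives (\ref{cd1}). If they do trace out a path, the top term equals $\b^\ell E_{i_1j_\ell}$, whose $(i,j)$-th entry is $\b^\ell\,\d_{i=i_1}\,\d_{j=j_\ell}$; subtracting it off leaves the sum over the remaining $2^\ell-1$ multi-indices, all of which have $|\e|\le\ell-1$, and the same estimate yields (\ref{cd2}). I do not expect any genuine obstacle here — the argument is a finite expansion plus a one-line coefficient inequality. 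The only points needing care are bookkeeping: identifying ``trace out a path'' with non-vanishing of the all-ones term, noting that matrix-unit entries are $\{0,1\}$-valued so the per-term bounds add with no sign cancellation, and checking that the degenerate case $\a=0$ (in which the asserted bound is $0$) is consistent, since then every term with $|\e|\le\ell-1$ already vanishes.
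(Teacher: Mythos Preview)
Your proof is correct and follows essentially the same route as the paper: expand via (\ref{Mer}), note that entries of products of matrix units lie in $\{0,1\}$, separate off the all-ones term, and bound the remaining coefficient sum by $\a\b^{\ell-1}2^\ell$. The paper compresses your per-term bound $\a^{\ell-|\e|}\b^{|\e|}\le\a\b^{\ell-1}$ summed over $2^\ell-1$ indices into the single line $(\a+\b)^\ell-\b^\ell\le\a\b^{\ell-1}2^\ell$, but the content is identical.
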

\begin{proof}
In either case, the expressions to be bounded are the matrix entries of the sum on the righthand side of (\ref{Mer}), except for the term corresponding to $\e_1=\e_2=\cdots=\e_\ell=1$ (which only comes up in (\ref{cd2}) anyhow).  The matrix entries of a product of $E_{i_rj_r}$ are all $\le 1$, so the sum is bounded by $(\a+\b)^\ell-\b^\ell \le \a \b^{\ell-1}2^\ell$.
\end{proof}

\noindent
Let $\e>0$ be a parameter (which will be specified later).
The Closest Group Element Problem (\ref{CGEP}) derived from this \textsc{tsp} instance is the following, assuming $\a>0$ (if $\a=0$, it is the verbatim rounding problem for {\em semigroups}):
\begin{multline}\label{roundtoz}
   \text{Find the closest product of length $\le n$ of the $g_e$'s  to the matrix~} \\  z \ \ = \ \ \(\begin{smallmatrix}
    \b^n E_{11}+\e I_n & & & & \\
    & 1 &  2K\cdots2K & & \\
    & & I_{n\times n} & & \\
    & & & 1 & 0 \\
    & & & & 1
    \end{smallmatrix}\) \ \ \text{in the matrix norm~(\ref{l2matnorm}).}
\end{multline}
The block structure of the matrices allows us to compute the distance of a product in terms of the features described after (\ref{multlaw}):
\begin{multline}\label{tspstar}
    \left\| \, \prod_{r\le \ell}g_{e_r} \,-\,z\,\right\|^2 \ \   = \ \
\left\| \, \prod_{r\le \ell}M_{e_r} \,-\,\b^nE_{11}\,-\,\e\, I_n\,   \right\|^2 \ + \\
\left\| \, \sum_{r\le \ell}v_{e_r} \,-\,[2K\cdots 2K]\,   \right\|^2
\ + \ \(\sum_{r\le\ell}w_{e_r}\)^2\!\!,
\end{multline}
where we again stress that $\|\cdot\|$ refers to the norm (\ref{l2matnorm}) for the rest of this section.

\begin{proposition}\label{abcprop} (Note that $\ell=n$ in parts (A) and (C).)

(A) If the edges $e_1,e_2,\ldots,e_n$ trace out a Hamiltonian cycle starting and ending at the first vertex, then
\begin{equation}\label{abcaa}
    \left\| \, \prod_{r\le \ell}M_{e_r} \,-\,\b^nE_{11}\,-\,\e\,I_n\,   \right\|^2 \ \ \le \ \ (n  \, \a \,  \b^{n-1} \, 2^n)^2 \ + \ (n\,\e)^2
\end{equation}
and consequently
\begin{equation}\label{abca}
     \left\| \, \prod_{r\le n}g_{e_r} \,-\,z\,\right\|^2 \ \ \le \ \ (n\, \a \, \b^{n-1} \, 2^n)^2 \ + \ (n\,\e)^2 \ + \ \ (\text{weight of cycle})^2.
\end{equation}
(B) If the edges $e_1$, $e_2$,\ldots, $e_\ell$ do not touch each vertex exactly twice, then
\begin{equation}\label{abcb}
  \left\| \, \prod_{r\le \ell}g_{e_r} \,-\,z\,\right\|^2 \ \ \ge \ \ K^2.
\end{equation}
(C) If the  edges $e_1$, $e_2$,\ldots, $e_n$ do not trace out a path beginning and ending at vertex 1, then
\begin{multline}\label{abcc}
\left\| \, \prod_{r\le n}M_{e_r} \,-\,\b^nE_{11}\,-\,\e\,I_n\,   \right\|^2 \ \
\ge \ \  \left| \, \(\prod_{r\le n}M_{e_r}\)_{11} \,-\,\b^n \,-\,\e   \,   \right|^2 \\ \ge \ \ (\e+\b^n-\a\b^{n-1}2^n)^2.
\end{multline}
\end{proposition}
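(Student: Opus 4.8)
The three parts of Proposition~\ref{abcprop} are all direct consequences of the block decomposition (\ref{tspstar}) of the matrix distance, together with the crude bounds on $\prod_{r\le\ell}M_{e_r}$ from Proposition~\ref{crudedetect}. The plan is to handle each part by isolating the relevant block of (\ref{tspstar}) and applying either Proposition~\ref{crudedetect} or a trivial entrywise lower bound.

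\medskip
For part (A), suppose $e_1,\ldots,e_n$ trace out a Hamiltonian cycle starting and ending at vertex $1$; then $i_1=1$ and $j_n=1$, so by (\ref{cd2}) every entry of $\prod_{r\le n}M_{e_r}-\b^n E_{11}$ is at most $\a\b^{n-1}2^n$ in absolute value. Since this is an $n\times n$ matrix, its squared Frobenius norm is at most $n^2(\a\b^{n-1}2^n)^2$. Wait --- more carefully: there are $n^2$ entries each bounded by $\a\b^{n-1}2^n$, giving $\|\prod M_{e_r}-\b^nE_{11}\|^2\le n^2(\a\b^{n-1}2^n)^2 = (n\,\a\b^{n-1}2^n)^2$; then subtracting the additional $\e I_n$ and using the triangle inequality in the Frobenius norm (the $\e I_n$ contributes $\|\e I_n\| = \e\sqrt n \le n\e$) yields (\ref{abcaa}) after possibly re-grouping as $(n\a\b^{n-1}2^n)^2+(n\e)^2$ --- here I should double-check that the cross term works out, but since $(a+b)^2 \le 2a^2+2b^2$ isn't quite what's claimed, I expect the intended bound actually uses that the $E_{11}$ and $\e I_n$ corrections affect disjoint-looking contributions, or simply absorbs constants; I would state it as the triangle-inequality bound $\big(n\a\b^{n-1}2^n + n\e\big)^2$ and note this is dominated by the displayed RHS up to harmless constants, or re-derive the exact split. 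Then (\ref{abca}) follows by plugging into (\ref{tspstar}): the middle term vanishes since a Hamiltonian cycle touches each vertex exactly twice (so $\sum v_{e_r}=[2K\cdots2K]$), and the last term is exactly (weight of cycle)$^2$.

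\medskip
For part (B), the hypothesis is that $e_1,\ldots,e_\ell$ fail to touch each vertex exactly twice. Then $\sum_{r\le\ell}v_{e_r}\ne[2K\cdots 2K]$; since every $v_e$ has entries that are multiples of $K$, the vector $\sum v_{e_r}-[2K\cdots2K]$ has all entries multiples of $K$ and is nonzero, hence has some entry of absolute value $\ge K$. Therefore the middle term of (\ref{tspstar}) alone is $\ge K^2$, and since the other two terms are nonnegative, (\ref{abcb}) follows. This part is essentially immediate.

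\medskip
For part (C), suppose $e_1,\ldots,e_n$ do not trace out a path beginning and ending at vertex $1$. The squared Frobenius norm of any matrix is at least the square of any single entry, so $\|\prod M_{e_r}-\b^nE_{11}-\e I_n\|^2 \ge |(\prod M_{e_r})_{11}-\b^n-\e|^2$. Now I split into cases: if the edges do not trace out a path at all, (\ref{cd1}) gives $(\prod M_{e_r})_{11}\le\a\b^{n-1}2^n$; if they trace out a path but not one from $1$ to $1$, then in (\ref{cd2}) the Kronecker-delta term $\b^n\d_{i=i_1}\d_{j=j_\ell}$ vanishes at position $(1,1)$, so again $(\prod M_{e_r})_{11}\le\a\b^{n-1}2^n$. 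In both cases $|(\prod M_{e_r})_{11}-\b^n-\e| \ge \b^n+\e-\a\b^{n-1}2^n$ (this is the relevant regime since the parameters will later be chosen with $\b^n$ dominating $\a\b^{n-1}2^n$), giving the claimed bound. The only mild subtlety is the case split on whether a path is formed, which is transparent from the structure of (\ref{Mer}); I expect the main (very minor) obstacle in the whole proposition to be getting the exact constant in (\ref{abcaa})'s split into two squared terms right rather than a single triangle-inequality bound, but this does not affect the downstream argument.
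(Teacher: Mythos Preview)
Your approach is essentially identical to the paper's, which is extremely terse: it says part~(A) follows from (\ref{cd2}) ``and the triangle inequality'', part~(B) from the middle block of (\ref{tspstar}) (with the same parity remark you make), and part~(C) from Proposition~\ref{crudedetect}. Your case split in~(C) into ``not a path'' versus ``a path but not from $1$ to $1$'' is exactly the right reading of Proposition~\ref{crudedetect}.

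The one point you flag deserves a clean resolution, since the paper glosses over it. You are right that the naive triangle inequality gives only
\[
\Big\|\prod_{r\le n}M_{e_r}-\b^nE_{11}-\e I_n\Big\| \ \le \ n\,\a\,\b^{n-1}2^n \ + \ \e\sqrt{n}\,,
\]
whose square has a cross term. The fix is to expand the square directly rather than use the triangle inequality. Set $A=\prod_{r\le n}M_{e_r}-\b^nE_{11}$. All entries of $\prod M_{e_r}$ are nonnegative (the $M_e$ have nonnegative entries), and the $(1,1)$ entry is at least $\b^n$ because the $\e_1=\cdots=\e_n=1$ term in (\ref{Mer}) already contributes $\b^n E_{11}$ for a Hamiltonian cycle from vertex~$1$ to itself. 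Hence every entry of $A$ lies in $[0,\a\b^{n-1}2^n]$. Then
\[
\|A-\e I_n\|^2 \ = \ \|A\|^2 \ - \ 2\,\e\,\operatorname{tr}(A) \ + \ n\,\e^2 \ \le \ \|A\|^2 \ + \ n\,\e^2 \ \le \ (n\,\a\,\b^{n-1}2^n)^2 \ + \ (n\,\e)^2,
\]
since $\operatorname{tr}(A)\ge 0$. This gives (\ref{abcaa}) exactly as stated, with no loss of constants.
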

\begin{proof}
The inequality (\ref{abcaa}) in part (A) is an immediate consequence of (\ref{cd2}) and the triangle inequality.  It then implies (\ref{abca}) because the middle term on the righthand side of (\ref{tspstar}) vanishes when the path enters and exists each vertex exactly once.

On the other hand, failure to touch each vertex exactly twice means one of the vector entries for the middle term in (\ref{tspstar}) will be at least $K$, showing that the righthand side of (\ref{abcb}) is at least $K^2$ (in fact by parity considerations it will be at least $2K^2$).   This demonstrates part (B).
 Part (C) is likewise a consequence of Proposition~\ref{crudedetect}.
\end{proof}

\begin{proposition}\label{123prop}
Suppose
\begin{enumerate}
  \item $(n \a  \b^{n-1}2^n)^2 \,+\,(n\e)^2 \ < \ m T^2$
  \item $K \ \ge  \ MT$
  \item $\e+\b^n\,-\,\a\b^{n-1}2^n \ \ge \  MT$.
\end{enumerate}
Then   any word of length $\le n$ in the $g_e$ closest to $z$ has the form $g_{e_1}g_{e_2}\cdots g_{e_n}$, where the edges $e_1,e_2,\ldots,e_n$ trace out a  Hamiltonian cycle of shortest total weight that begins and ends at the first vertex.
\end{proposition}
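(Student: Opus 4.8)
The plan is to show that under hypotheses (1)--(3), the squared distance $\|\prod g_{e_r} - z\|^2$ for any length-$\le n$ word is minimized exactly by words tracing out a shortest Hamiltonian cycle based at vertex $1$. First I would dispose of words of length $\ell < n$: note that if the product of fewer than $n$ edge-matrices is to touch each vertex exactly twice the underlying multiset must have size $n$ (each vertex needs degree $2$ in an $n$-vertex graph, so $\sum \deg = 2n = 2\ell$ forces $\ell = n$); hence for $\ell < n$ some vertex is missed or over-touched, Proposition~\ref{abcprop}(B) gives $\|\prod g_{e_r}-z\|^2 \ge K^2 \ge (MT)^2$ by hypothesis (2), which already exceeds any Hamiltonian-cycle value from below (see next step). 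So only $\ell = n$ words are candidates.

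Next I would bound the candidate Hamiltonian cycles from above and all non-cycles from below. For a genuine Hamiltonian cycle based at vertex $1$, Proposition~\ref{abcprop}(A), equation (\ref{abca}), gives
$$\left\|\prod_{r\le n}g_{e_r}-z\right\|^2 \ \le \ (n\,\a\,\b^{n-1}2^n)^2 + (n\e)^2 + (\text{weight})^2 \ < \ mT^2 + (\text{weight})^2,$$
using hypothesis (1), and since the weight is $\ge mT$ (the rescaled lower bound) and is an integer multiple of $T$, this is strictly less than $2(\text{weight})^2$; more to the point it is $< (\text{weight}+1)^2 \cdot$ something — the cleanest route is to observe it lies below $(\text{weight})^2 + mT^2 \le (\text{weight})^2 + (\text{weight})^2$. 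Meanwhile, for any length-$n$ word whose edges do \emph{not} trace a path based and ending at vertex $1$: if it fails to touch each vertex exactly twice, use (B) to get $\ge K^2 \ge (MT)^2 \ge (\text{weight of any cycle})^2$ — too big. If it does touch each vertex twice but still isn't a path from $1$ to $1$, then the $w$-coordinate contributes $(\sum w_{e_r})^2$ and the top-left block contributes, by (C) via (\ref{abcc}), at least $(\e + \b^n - \a\b^{n-1}2^n)^2 \ge (MT)^2$ by hypothesis (3), again dominating every Hamiltonian-cycle distance. Thus every optimal word must in fact be a Hamiltonian cycle based at vertex $1$.

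Finally, among Hamiltonian cycles based at vertex $1$, I would compare two such words directly. For any of them the middle ($v$-)term vanishes and the top-left block term is at most $(n\a\b^{n-1}2^n)^2 + (n\e)^2 < mT^2$ while being $\ge 0$; the bottom term is exactly $(\text{weight})^2$. So the squared distance of a cycle of weight $W$ lies in the interval $[\,W^2,\ W^2 + mT^2\,)$. Since distinct achievable cycle weights differ by at least $T$ (weights are integer multiples of $T$ after rescaling) and all lie in $[mT, MT]$, a cycle of weight $W_1$ beats one of weight $W_2 > W_1$ provided $W_1^2 + mT^2 \le W_2^2$, i.e. $mT^2 \le (W_2-W_1)(W_2+W_1)$; since $W_2 - W_1 \ge T$ and $W_1 + W_2 \ge 2mT$ this holds as $mT^2 \le T\cdot 2mT$ — actually I should use the sharper fact from (\ref{isolatedmin}) that any cycle weight below $m_0T + T$ is already the minimum, which is exactly why the $+T$ rescaling margin was introduced. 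So the minimizer is a shortest Hamiltonian cycle based at vertex $1$, as claimed.

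The main obstacle is the bookkeeping in the last step: making the separation $mT^2 < $ (gap between consecutive squared cycle-weights) airtight, since naively $mT^2$ is the same order as $T \cdot W$ and one must genuinely invoke the rescaling margin (\ref{isolatedmin}) rather than a crude inequality. Everything else is a direct assembly of Propositions~\ref{crudedetect} and~\ref{abcprop} with the three numerical hypotheses; the parameters $\a,\b,K,\e,T$ have been arranged precisely so that the three "too big" bounds ($K^2$, $(\e+\b^n-\a\b^{n-1}2^n)^2$) and the one "small error" bound ($(n\a\b^{n-1}2^n)^2+(n\e)^2$) separate cleanly on the scale set by $mT^2$ versus $(MT)^2$.
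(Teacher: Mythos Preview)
Your proposal is correct and follows the paper's approach: eliminate words not touching each vertex twice via Proposition~\ref{abcprop}(B) and hypothesis 2 (this in particular handles lengths $\ell<n$), eliminate length-$n$ non-cycles via (C) and hypothesis 3, and compare Hamiltonian cycles via (A) and hypothesis 1. One imprecision worth tightening: when you write ``$(MT)^2\ge(\text{weight of any cycle})^2$ --- too big,'' that is the wrong comparison; you need $(MT)^2$ to exceed the \emph{upper} bound $mT^2+(m_0T)^2$ on the shortest cycle's squared distance, and the paper closes this using the standing assumption $M\ge m_0+1$, which gives $mT^2+m_0^2T^2\le m_0(m_0+1)T^2<(m_0+1)^2T^2\le M^2T^2$. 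For the final separation among Hamiltonian cycles the paper bypasses your monotonicity computation and argues directly from (\ref{isolatedmin}): if $h$ is any closest word (necessarily a cycle) then $(\text{weight of }h)^2\le\|h-z\|^2\le\|h_1-z\|^2<(m_0+1)^2T^2$, so its weight is below $m_0T+T$ and hence minimal --- exactly the route you flagged as the cleaner alternative.
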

\begin{proof}
We shall use all three parts of the previous Proposition.  Part (A) and property 1 imply that if $e_1,e_2,\ldots,e_n$ is the shortest Hamiltonian cycle and $h_1=g_{e_1}g_{e_2}\cdots g_{e_n}$, then (\ref{tspstar}) implies 
\begin{multline}\label{a1}
     \left\| \, h_1 \,-\,z\,\right\|^2 \ \  \le \ \ (n \a  \b^{n-1}2^n)^2\,+\,(n\e)^2\,+\,(m_0T)^2 \\
 < \ \ m T^2 \, + \, m_0^2T^2 \ \
 \le \ \ T^2(m_0+1)^2\,,
\end{multline}
because $m\le m_0$.

Part (B) and property 2 imply that a path which does not touch each vertex exactly twice has
\begin{equation}\label{b2}
    M^2T^2 \ \ \le \ \  \left\| \, \prod_{r\le \ell}g_{e_r} \,-\,z\,\right\|^2.
\end{equation}
Since we have assumed $M \ge m_0+1$, the word $\prod_{r\le \ell}g_{e_r}$ cannot be closest to $z$. In particular, the closest word to $z$ must be a product of length exactly $n$ (otherwise the edges it is formed from do not touch each vertex exactly twice).
Part (C) and property 3 likewise show that the edges of the closest word trace out a path beginning and ending at 1.

Thus the closest word comes from a Hamiltonian cycle.  We now must show that it comes from the  Hamiltonian cycle of lowest total weight.  Indeed, suppose that $h=\prod_{r\le n}g_{e_r}$ comes from a Hamiltonian cycle and
 $\|h-z\|< \|h_1-z\|$.  By (\ref{tspstar}) and (\ref{a1}) we must have
\begin{equation}\label{hdistz}
(\text{total weight of $h$'s path})^2 \ \  \le \ \ \|h-z\|^2 \ \
 < \ \ \|h_1-z\|^2 \ \
\le \ \  T^2(m_0+1)^2\,,
\end{equation}
and property (\ref{isolatedmin}) shows that this path is minimal -- a contradiction.
\end{proof}

\begin{proposition}\label{approxfactorreduct}  Suppose edges $e_1,e_2,\ldots,e_n$ trace out a Hamiltonian cycle starting and ending at the first vertex, and whose total weight is $\le m_0 T A$ for some $A\ge 1$ (that is,  within a factor $A$ of being minimal).  Suppose furthermore that
\begin{equation}\label{assumeapproxfactorreduct}
(n\a\b^{n-1}2^n)^2 \, + \, (n\e)^2 \ \ \le \ \  (m A T)^2\,,
\end{equation}
which is a consequence of the first assumption of Proposition~\ref{123prop} since $m,A\ge 1$.
  If $h=g_{e_1}g_{e_2}\cdots g_{e_n}$ (respectively,  $h'$) is the word formed from this cycle (respectively, a minimal cycle), then
\begin{equation}\label{approxreduct}
    \|h-z\| \ \ \le \ \ \sqrt{2}\,A\,\|h'-z\|.
\end{equation}
\end{proposition}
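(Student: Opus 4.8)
The plan is to compare the distance $\|h-z\|$ directly against $\|h'-z\|$ using the decomposition (\ref{tspstar}), exploiting that both $h$ and $h'$ come from Hamiltonian cycles so that the middle (vector) term vanishes for each. First I would apply part (A) of Proposition~\ref{abcprop} to $h$: since $e_1,\ldots,e_n$ trace out a Hamiltonian cycle beginning and ending at vertex $1$, (\ref{abca}) gives
\begin{equation*}
    \|h-z\|^2 \ \ \le \ \ (n\,\a\,\b^{n-1}2^n)^2 \,+\,(n\e)^2 \,+\, (\text{weight of $h$'s cycle})^2\,.
\end{equation*}
The weight of $h$'s cycle is at most $m_0 T A$ by hypothesis, and the first two terms are at most $(mAT)^2$ by the assumption (\ref{assumeapproxfactorreduct}). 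Since $m \le m_0$, this yields $\|h-z\|^2 \le (m_0 A T)^2 + (m_0 T A)^2 = 2\,(m_0 T A)^2$, i.e.\ $\|h-z\| \le \sqrt{2}\,A\,m_0 T$.

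Next I would bound $\|h'-z\|$ from below. Because $h'$ also comes from a Hamiltonian cycle (the minimal one, of weight exactly $m_0 T$ after rescaling), the decomposition (\ref{tspstar}) for $h'$ has vanishing vector term, so
\begin{equation*}
    \|h'-z\|^2 \ \ = \ \ \left\| \prod_{r\le n} M_{e'_r} - \b^n E_{11} - \e I_n \right\|^2 \,+\, (m_0 T)^2 \ \ \ge \ \ (m_0 T)^2\,,
\end{equation*}
hence $\|h'-z\| \ge m_0 T$. Combining the two estimates gives $\|h-z\| \le \sqrt{2}\,A\,m_0 T \le \sqrt{2}\,A\,\|h'-z\|$, which is exactly (\ref{approxreduct}).

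The only genuinely delicate point is making sure the vector term in (\ref{tspstar}) really does vanish for both words — this uses the observation just after (\ref{multlaw}) that $\sum_r v_{e_r} = [2K\cdots 2K]$ precisely when the edges touch each vertex exactly twice, which holds for any Hamiltonian cycle. One should also double-check that the rescaling convention makes the minimal cycle weight equal to $m_0 T$ (an integer multiple of $T$), so that the lower bound $\|h'-z\| \ge m_0 T$ is the right quantity to feed back in; everything else is the triangle inequality and the arithmetic $m \le m_0$, $A \ge 1$. I expect no substantive obstacle: the result is essentially a bookkeeping consequence of Proposition~\ref{abcprop}(A) together with the trivial lower bound coming from the weight block of $z$ being $0$.
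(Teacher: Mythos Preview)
Your proposal is correct and follows essentially the same route as the paper's proof: apply (\ref{abca}) to bound $\|h-z\|^2$ by $(n\a\b^{n-1}2^n)^2+(n\e)^2+(\text{weight})^2 \le (mAT)^2+(m_0AT)^2 \le 2(m_0AT)^2$, then use (\ref{tspstar}) to get $\|h'-z\|\ge m_0T$ from the weight block. Your additional remarks about the vanishing of the vector term for Hamiltonian cycles are accurate but merely spell out what the paper leaves implicit in its citation of (\ref{tspstar}).
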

\begin{proof}
By (\ref{abca}) one has
\begin{multline}\label{approxfactorreduct1}
    \|h-z\|^2 \ \ \le \ \ (n\a\b^{n-1}2^n)^2\,+\,(n\e)^2\,+\,(\text{weight of path})^2 \ \ \le \\ m^2A^2T^2\,+\,m_0^2A^2T^2 \ \ \le \ \ 2 (m_0AT)^2.
\end{multline}
The result follows because  $\|h'-z\|\ge m_0T$ by (\ref{tspstar}).
\end{proof}

The conditions of the previous Propositions can be achieved with matrix entries that are polynomially-sized in the input of the \textsc{tsp} instance.  For example, the following parameter choices are easily checked to satisfy them.

\begin{proposition}\label{examplesof123}
Properties 1, 2, and 3 of Proposition~\ref{123prop} as well as (\ref{assumeapproxfactorreduct}) hold under the following parameter choices.

(i)  $\a=1$, $\b=\max(2^{n+4},4M^2,\f{n^22^{2n+1}}{m})$, $\epsilon=\f{1}{2n}$, $T=\b^{n-1/2}$, and $K=MT$.  In this case the matrices $g_e$ all have determinant 1.

(ii) $T=1$, $K=M$, $\b=(2M)^{1/n}$, $\a=\f{\sqrt{m/2}}{n2^n\b^{n-1}}$,  and $\e=\f{\sqrt{m/2}}{n}$.  In this case the matrices all have determinant $\a^n$ (and are hence invertible).

(iii) $\a=0$,  $\b=M^{1/n}$, $\e=0$, $K=M$, and  $T=1$.  In this case the matrices are not invertible.
\end{proposition}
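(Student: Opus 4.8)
The statement is a finite verification, so the plan is to substitute each of the three parameter sets in turn and check Properties 1, 2, and 3 of Proposition~\ref{123prop}, condition~(\ref{assumeapproxfactorreduct}), and the asserted value of $\det g_e$. I would dispose of the determinant first, since it is computed the same way in all three cases: the matrix $g_e$ in~(\ref{gedge}) is block upper triangular with diagonal blocks $M_e$, $1$, $I_{n\times n}$, $1$, $1$, so $\det g_e=\det M_e$; and since $i\neq j$ the elementary matrix $E_{ij}$ is nilpotent (indeed $E_{ij}^2=0$), so $M_e=\a I+\b E_{ij}$ has every eigenvalue equal to $\a$ and $\det g_e=\a^n$. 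This equals $1$ in case (i), is positive in case (ii), and equals $0$ in case (iii), which establishes the invertibility assertions.

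For the inequalities I would first observe that Property 2 holds with equality $K=MT$ in every case. Case (iii) is then immediate: Property 1 reads $0<m$ and Property 3 reads $M\ge M$. In case (ii), the chosen values give $n\a\b^{n-1}2^n=\sqrt{m/2}$, $n\e=\sqrt{m/2}$, and $\b^n=2M$, so Property 1 becomes $\f m2+\f m2\le m$ and Property 3 becomes $\sqrt{m/2}/n+2M-\sqrt{m/2}/n=2M\ge M$; to recover the strict inequality in Property 1 one shrinks $\a$ and $\e$ by an arbitrarily small amount, which affects none of the later estimates. In case (i) the three terms defining $\b$ each do one job: from $\b\ge\f{n^22^{2n+1}}{m}$ one gets $m\b^{2n-1}\ge 2n^22^{2n}\b^{2n-2}$, which exceeds the left side $n^22^{2n}\b^{2n-2}+\f14$ of Property 1 (after setting $\a=1$, $\e=\f1{2n}$, $T^2=\b^{2n-1}$) by at least $n^22^{2n}\b^{2n-2}\ge1$; and from $\b\ge2^{n+4}$ one gets $\b^n-\b^{n-1}2^n=\b^{n-1}(\b-2^n)\ge\f{15}{16}\b^n$, whence $\b\ge4M^2>\f{256}{225}M^2$ yields $\f{15}{16}\b^n\ge M\b^{n-1/2}=MT$, which is Property 3.

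Condition~(\ref{assumeapproxfactorreduct}) then follows in all three cases from Property 1 together with $m,A\ge1$, since $(mAT)^2\ge(mT)^2\ge mT^2$. I would finish by noting that all parameters, and hence all entries of the $g_e$ and of $z$, have bit length polynomial in the size of the \textsc{tsp} instance (in case (i) the largest, $T=\b^{n-1/2}$, has $O(n^2\log\b)$ bits). I expect no real obstacle here: the only manipulation needing care is keeping $\b-2^n$ a fixed positive fraction of $\b$ in case (i) — which is precisely why the threshold $\b\ge2^{n+4}$ is imposed — together with the cosmetic point about strict versus weak inequality in Property 1 of case (ii).
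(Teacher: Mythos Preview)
Your verification is correct and is exactly the kind of direct check the paper has in mind (the paper gives no proof beyond the remark that the conditions ``are easily checked''). Your observation that the case~(ii) parameters yield equality rather than strict inequality in Property~1 is accurate; your fix of shrinking $\a$ and $\e$ infinitesimally is fine, and in fact inspection of the proof of Proposition~\ref{123prop} shows that equality already suffices there, so this is indeed only cosmetic.
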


\noindent Since the entries in these matrices  $g_{ij}$ and $z$ are polynomially sized, Theorem~\ref{negthm} then follows immediately from Proposition~\ref{approxfactorreduct} and the corresponding inapproximability of the Traveling Salesman Problem on graphs \cite[Theorem~3.6]{vazirani}.

\begin{bibsection}

\begin{biblist}
\bib{arora93hardness}{article}{
author={Arora, Sanjeev},
author={Babai, L{\'a}szl{\'o}},
author={Stern, Jacques},
author={Sweedyk, Z.},
title={The hardness of approximate optima in lattices, codes, and systems
of linear equations},
note={34th Annual Symposium on Foundations of Computer Science (Palo
Alto, CA, 1993)},
journal={J. Comput. System Sci.},
volume={54},
date={1997},
number={2},
pages={317--331}
}

\bib{babai}{article}{   author={Babai, L.},
   title={On Lov\'asz' lattice reduction and the nearest lattice point
   problem},
   journal={Combinatorica},
   volume={6},
   date={1986},
   number={1},
   pages={1--13}
}

\bib{bougerol}{book}{
author={Bougerol, Philippe},
author={Lacroix, Jean},
title={Products of random matrices with applications to Schr\"odinger
operators},
series={Progress in Probability and Statistics},
volume={8},
publisher={Birkh\"auser Boston Inc.},
place={Boston, MA},
date={1985}
}

\bib{dinur98approximatingcvp}{article}{
author={Dinur, I.},
author={Kindler, G.},
author={Safra, S.},
title={Approximating CVP to within almost-polynomial factors is NP-hard},
conference={
title={Symposium on Foundations of Computer Science}},
book={
publisher={IEEE},
place={New York},
},
pages={99-111},
date={1998}
}

\bib{fuchs}{article}{
   author={Fuchs, Elena},
   title={The ubiquity of thin groups},
   conference={
      title={Thin groups and superstrong approximation},
   },
   book={
      series={Math. Sci. Res. Inst. Publ.},
      volume={61},
      publisher={Cambridge Univ. Press, Cambridge},
   },
   date={2014},
   pages={73--92},
   review={\MR{3220885}},
}

\bib{furst}{article}{
   author={Furstenberg, Harry},
   title={Noncommuting random products},
   journal={Trans. Amer. Math. Soc.},
   volume={108},
   date={1963},
   pages={377--428}
}

\bib{FK}{article}{
   author={Furstenberg, H.},
   author={Kesten, H.},
   title={Products of random matrices},
   journal={Ann. Math. Statist},
   volume={31},
   date={1960},
   pages={457--469}
}

\bib{BraidEqns}{article}{
author={Garber, D.}, author={Kaplan, S.}, author= {Teicher, M.},author={Tsaban, B.}, author = {Vishne, U.},title={Probabilistic solutions of equations in the braid
group},journal={Advances in Applied
Mathematics},volume={35},year={2005},pages={323--334},
note={\url{http://arxiv.org/abs/math/0404076}}}

\bib{Gurevich}{article}{
   author={Gurevich, Yuri},
   author={Schupp, Paul},
   title={Membership problem for the modular group},
   journal={SIAM J. Comput.},
   volume={37},
   date={2007},
   number={2},
   pages={425--459 (electronic)}
}

\bib{gromov}{article}{
   author={Gromov, M.},
   title={Asymptotic invariants of infinite groups},
   conference={
      title={Geometric group theory, Vol.\ 2},
      address={Sussex},
      date={1991},
   },
   book={
      series={London Math. Soc. Lecture Note Ser.},
      volume={182},
      publisher={Cambridge Univ. Press},
      place={Cambridge},
   },
   date={1993},
   pages={1--295}
}

\bib{joux98lattice}{article}{
author={Joux, Antoine},
author={Stern, Jacques},
title={Lattice reduction: a toolbox for the cryptanalyst},
journal={J. Cryptology},
volume={11},
date={1998},
number={3},
pages={161--185}
}

\bib{lll}{article}{
author={Lenstra, A. K.},
author={Lenstra, H. W., Jr.},
author={Lov{\'a}sz, L.},
title={Factoring polynomials with rational coefficients},
journal={Math. Ann.},
volume={261},
date={1982},
number={4},
pages={515--534}
}

\bib{lepage}{article}{
   author={Le Page, {\'E}mile},
   title={Th\'eor\`emes limites pour les produits de matrices al\'eatoires},
   language={French},
   conference={
      title={Probability measures on groups},
      address={Oberwolfach},
      date={1981},
   },
   book={
      series={Lecture Notes in Math.},
      volume={928},
      publisher={Springer},
      place={Berlin},
   },
   date={1982},
   pages={258--303}
}

\bib{LMR}{article}{
   author={Lubotzky, Alexander},
   author={Mozes, Shahar},
   author={Raghunathan, M. S.},
   title={The word and Riemannian metrics on lattices of semisimple groups},
   journal={Inst. Hautes \'Etudes Sci. Publ. Math.},
   number={91},
   date={2000},
   pages={5--53 (2001)}
}

\bib{margulis}{book}{
   author={Margulis, G. A.},
   title={Discrete subgroups of semisimple Lie groups},
   series={Ergebnisse der Mathematik und ihrer Grenzgebiete (3) [Results in
   Mathematics and Related Areas (3)]},
   volume={17},
   publisher={Springer-Verlag},
   place={Berlin},
   date={1991},
   pages={x+388}
}

\bib{markov}{book}{author={Markov, A. A.},title={The theory of Algorithms},year={1961},series={Israel Program for Scientific Translation},place={ Jerusalem}}

\bib{mihail}{article}{
author={Mihailova, K. A.},
title={The occurrence problem for direct products of groups},
language={Russian},
journal={Mat. Sb. (N.S.)},
volume={70 (112)},
date={1966},
pages={241--251}
}

\bib{rudin}{book}{
author={Rudin, Walter},
title={Functional analysis},
series={International Series in Pure and Applied Mathematics},
edition={2},
publisher={McGraw-Hill Inc.},
place={New York},
date={1991}
}

\bib{Thomp1}{article}{
author={ Ruinskiy, D.}, author={Shamir, A.}, author={Tsaban,B.},title={Length-based
cryptanalysis:~The case of Thompson's Group},journal={Journal of Mathematical
Cryptology},volume={1}, year={2007},pages={359--372}}

\bib{sarnak}{article}{
   author={Sarnak, Peter},
   title={Notes on thin matrix groups},
   conference={
      title={Thin groups and superstrong approximation},
   },
   book={
      series={Math. Sci. Res. Inst. Publ.},
      volume={61},
      publisher={Cambridge Univ. Press, Cambridge},
   },
   date={2014},
   pages={343--362},
   review={\MR{3220897}},
}

\bib{shamir82}{article}{
author={Shamir, Adi},
title={A polynomial time algorithm for breaking the basic Merkle-Hellman
cryptosystem},
conference={title={Crypto}},
date={1982},
pages={279--288}
}

\bib{logic}{article}{
author = {Tsitsiklis, J.},
author={Blondel, V.},
  title = {The Lyapunov exponent and joint spectral radius of pairs of matrices are
    hard - when not impossible - to compute and to approximate},
journal={Mathematics of Control, Signals, and Systems},
volume={    10},
pages={31--40},
note={Correction in {\bf 10}, p. 381},
  year = {1997}}

\bib{vazirani}{book}{  author={Vazirani, Vijay V.},
   title={Approximation algorithms},
   publisher={Springer-Verlag, Berlin},
   date={2001},
   pages={xx+378},
   isbn={3-540-65367-8},
   review={\MR{1851303 (2002h:68001)}},
}

\bib{zimmer}{book}{
   author={Zimmer, Robert J.},
   title={Ergodic theory and semisimple groups},
   series={Monographs in Mathematics},
   volume={81},
   publisher={Birkh\"auser Verlag},
   place={Basel},
   date={1984},
   pages={x+209}
}

\end{biblist}
\end{bibsection}

\end{document}